\newcommand{\R}{\mathbb{R}}
\newcommand\restr[2]{{% we make the whole thing an ordinary symbol
		\left.\kern-\nulldelimiterspace % automatically resize the bar with \right
		#1 % the function
		\vphantom{\big|} % pretend it's a little taller at normal size
		\right|_{#2} % this is the delimiter
}}
\newcommand{\ud}{\mathrm{d}}
\newtheorem{theorem}{Theorem}[section]
\newtheorem{corollary}[theorem]{Corollary}
\newtheorem{lemma}[theorem]{Lemma}
\newtheorem{proposition}[theorem]{Proposition}
\theoremstyle{definition}
\newtheorem{definition}[theorem]{Definition}
\theoremstyle{definition}
\newtheorem{remark}[theorem]{Remark}
\theoremstyle{definition}
\numberwithin{equation}{section}
\numberwithin{table}{section}
\numberwithin{figure}{section}
\title{\bf Traveling waves in reaction-diffusion-convection equations with combustion nonlinearity}
\date{}
\author[1]{Pavel Dr\'{a}bek}
\author[2]{Michaela Zahradn\'{i}kov\'{a}\footnote{Corresponding author. Orcid ID: 0000-0003-3303-6969. Email: zahram05@prf.jcu.cz}}
\affil[1]{\footnotesize Department of Mathematics and NTIS, Faculty of Applied Sciences, University of West Bohemia, Univerzitn\'{i}~8, 301 00 Plze\v{n}, Czech Republic}
\affil[2]{\footnotesize Department of Mathematics, Faculty of Science, University of South Bohemia,\newline Brani\v{s}ovsk\'{a} 1760, 370 05 \v{C}esk\'{e} Bud\v{e}jovice, Czech Republic}
\renewcommand{\@fnsymbol}[1]{%
	\ifcase#1 \or {\,\Letter\!} \or\textasteriskcentered\or \textasteriskcentered\textasteriskcentered 
	\else\@ctrerr\fi}
\begin{document}

\maketitle

\begin{minipage}{0.85\textwidth}\parindent=15.5pt

{\small
\noindent {\bf Abstract.}
This paper concerns the existence and properties of traveling wave solutions to reaction-diffusion-convection equations on the real line. We consider a general diffusion term involving the $p$-Laplacian and combustion-type reaction term.
We extend and generalize the results established for $p=2$ to the case of singular and degenerate diffusion.
Our approach allows for non-Lipschitz reaction as well.
We also discuss the shape of the traveling wave profile near equilibria, assuming power-type behavior of the reaction and diffusion terms.
%We study reaction-diffusion-convection equations with reaction term of combustion type and degenerate quasilinear diffusion term of the $p$-Laplacian type under the presence of convection. In terms of reaction, diffusion, convection and the value of $p>1$ we provide sufficient conditions for existence and nonexistence of travelling wave solutions. We also discuss the the influence of $p$ and the Lipschitz continuity of reaction term on the particular shape of the traveling wave profile.

\smallskip
	
\noindent {\bf{Keywords:}} reaction-diffusion-convection equation, combustion-type reaction, traveling waves.
\smallskip
	
\noindent{\bf{2020 Mathematics Subject Classification:}} Primary 35K57, 35C07; Secondary 34A12, 35K65.
}

\end{minipage}

\section{Introduction}
\label{s:Intro}

In this paper, we study the existence of traveling wave solutions for the reaction-diffusion-convection equation
\begin{equation}
\label{eqv_partial}	
	v_t=\left[D(v)|v_x|^{p-2}v_x\right]_x + (H(v))_x + g(v), \quad x \in \R, \,t\geq 0.
\end{equation}
%\begin{equation} 
%	\frac{\partial v}{\partial t}
%	=
%	\frac{\partial}{\partial x} \left[D(v)\left|\frac{\partial v}{\partial x}\right|^{p-2}\frac{\partial v}{\partial x}\right]
%	+ \frac{\partial H(v)}{\partial x} +g(v), \quad x \in \R,\, t \geq 0.
%\end{equation}
Here $p>1$, $D \in C^1(0,1)$ is a diffusion coefficient with $D>0$ in $(0,1)$, $H \in C^1[0,1]$ represents a nonlinear convective flux function, and the reaction term $g\in C[0,1]$ 
satisfies
\begin{equation} \label{g_comb}
	g(v) =0 \,\mbox{ in }\, [0,\theta], \quad g(v)>0 \,\mbox{ in }\, (\theta,1), \quad g(1)=0
\end{equation}
for some $\theta \in (0,1)$.
Let us emphasize that the diffusion coefficient $D$ might degenerate or have a singularity at one or both endpoints. 
A traveling wave solution (t.w.s. for short) is a solution of the form $v(x,t)=u(x-ct)$, $c \in \R$, connecting the equilibria 0 and 1. Besides the unknown profile $u$, the constant speed $c$ also needs to be determined. 

Reaction-diffusion equations with and without convective effects are used to describe a variety of phenomena in biology, chemistry and physics. We refer to \cite{GK} for an overview of classical applications in the case $p=2$, involving different types of reaction terms.
Nonlinearity $g$ satisfying \eqref{g_comb} arises in combustion models with an ignition temperature assumption, in which the reaction starts only after the temperature reaches the threshold value $\theta$, cf.~\cite{Ber}.
The diffusion coefficient $D$ is usually assumed to be strictly positive in $[0,1]$. However, modeling of certain dispersal phenomena suggests also coefficients with degenerations or singularities at equilibria, see, e.g., \cite{KingMcCabe}, \cite{SG-Maini}.   
More recently, $p$-Laplacian-type diffusion has also been considered in literature, see \cite{Audrito,Enguica,Hamydy} and their references. The $p$-Laplacian operator itself appears for instance in models derived from the power-type Darcy's law, cf. \cite{BGK}.

If $H$ is constant, i.e., no convective effects are present, equation \eqref{eqv_partial} possesses a unique %(up to a translation of the origin)
nonincreasing t.w.s. with a positive wave speed $c^*$.
For $p=2$, this result was proved in \cite{MMM} assuming $g \in C[0,1]$ and $D \in C^1[0,1]$ strictly positive in $[0,1]$. Furthermore, if stronger regularity conditions on $g$ are imposed, 
%guaranteeing the uniqueness of the associated Cauchy problem, 
the t.w.s. is strictly monotone on $\R$. 
In our previous work \cite{DrZa-tw}, we proved the existence of a nonincreasing solution to \eqref{eqv_partial} without convection in an even more general setting. In particular, we considered a discontinuous coefficient $D$ with finitely many jumps in $(0,1)$ and with possible singularities and/or degenerations at equilibria 0 and 1. This required a new concept of non-classical non-smooth solution, but the existence result remained the same also for $p>1$.

In \cite{MM}, the authors investigate the effect of convection on the existence of t.w.s. in the case $p=2$ and $D>0$ in $[0,1]$.
They derive existence and non-existence results based on whether, in some sense, $H$ prevails over or is weak compared to the terms $D$ and $g$. 
Our paper focuses on the same phenomena but we consider arbitrary $p>1$ as well as a more general diffusion coefficient $D$ (possibly degenerate or singular at 0 and 1) and \mbox{a non-Lipschitz reaction $g$}.
%$g$ non-Lipschitz at 1.
The main contribution of our paper consists in showing how the results from \cite{MM} extend to our more general case.
We also note that our method of proof differs from that in \cite{MM}.

%Solutions to equations involving the $p$-Laplacian may not have the second derivative. They are also known for their finite property, meaning that the solution of \eqref{eqv_partial} can reach the steady state 0 in finite time.
%If $g$ is not Lipschitz at 1, an analogous situation might occur also at the other equilibrium 1.
%Consequently, the wave profile $u=u(\xi)$, $\xi=x-ct$, might not be $C^1(\R)$ if these transitions are not smooth.
%The wave profile $u$ then need not be $C^1(\R)$, if such transitions are not smooth.
%Therefore, we need to consider the wave profile as a function on $\R$ in a more general sense.

The paper is organized as follows. In Section \ref{s:mainresults}, we present the definition of possibly non-smooth traveling wave profile $u$ and our main existence and nonexistence results. 
In Section \ref{s:Reduction}, we first discuss monotonicity property of the wave profile, which then allows us to reduce the second-order problem for the unknown profile $u$ to a first-order one on a bounded interval.
Proofs of the main results are provided in Sections \ref{s:Nonex} (nonexistence) and \ref{s:Ex} (existence).
Auxiliary lemmas referenced in Sections \ref{s:Reduction} and \ref{s:Ex} can be found in the Appendices \ref{ap:A} and \ref{ap:B}, respectively.
Finally, Section \ref{s:Asymptotics} is dedicated to discussions concerning the asymptotic properties of solutions, while assuming power-type behavior of the terms $D$ and $g$.

\section{Main results}
\label{s:mainresults}

Without loss of generality, we assume $H(0)=0$ and write 
\begin{equation*}
	H(u) \coloneqq \int_{0}^{u} h(s)\,\ud s.
\end{equation*}
%i.e., $H(0)=0$. 
Here $h(u)=\frac{\ud}{\ud u} H(u)$, $u \in [0,1]$, is the convective velocity.
%We look for travelling wave solutions (t.w.s. for short) of \eqref{eqv_partial} connecting equilibria $v=0$ and $v=1$. In particular, we look for solutions of the form $v(t,x)=u(x-ct)$ for some $c \in \R$. Then \eqref{eqv_partial} is equivalent to the solvability of the boundary value problem
When looking for traveling wave solutions $v(x,t)=u(\xi)$, $\xi \coloneq x-ct$, the partial differential equation \eqref{eqv_partial} becomes a boundary value problem for the second-order ordinary differential equation on the real line 
\begin{equation}
	\label{2BVP-u}
	\begin{cases}
		\left(D(u)|u'|^{p-2}u'\right)' +\left(c+h(u)\right)u' +g(u)=0,\\
		u(-\infty)=1,\,\, u(+\infty)=0
	\end{cases}
\end{equation}
where $u' \coloneqq \frac{\ud u}{\ud \xi}$ stands for the derivative with respect to the wave coordinate $\xi$. 
Since the equation in \eqref{2BVP-u} is autonomous, its solutions are invariant under translations. Therefore, we can always normalize solutions of \eqref{2BVP-u} as $u(0)=\theta$.

Before presenting the main results of this paper, we first provide the definition of solution to \eqref{2BVP-u} on $\R$. This concept accounts for situations when the profile $u$ reaches one or both equilibria 0 and 1.

\begin{definition}
\label{def:sol}
%Let $I_u=\{\xi \in \R: 0<u(\xi)<1\}$. 
A continuous function $u: \R \to [0,1]$ is a solution of \eqref{2BVP-u} if
\begin{itemize}
	\item [(a)]
	$u \in C^1(I_u)$, $I_u \coloneq \{\xi \in \R: 0<u(\xi)<1\}$, and the equation in \eqref{2BVP-u} holds at every point of $I_u$;
	\item [(b)]
	the function $\xi \mapsto D(u(\xi))|u'(\xi)|^{p-2}u'(\xi)$ is continuous on $\R$ and $D(u(\xi))|u'(\xi)|^{p-2}u'(\xi) \to 0$ as $u(\xi) \to 0$ and $u(\xi) \to 1$;
	\item [(c)]
	(boundary conditions)
	$u(\xi) \to 1$ as $\xi \to -\infty$ and $u(\xi) \to 0$ as $\xi \to +\infty$.
\end{itemize}
\end{definition}

\begin{remark}
At the beginning of Section \ref{s:Reduction}, we prove that $I_u$ is an open interval, bounded or unbounded, and that $u'(\xi)<0$ for all $\xi \in I_u$.
If $p=2$, $D \in C^1[0,1]$ with $D>0$ in $[0,1]$ and $g$ is a Lipschitz function on $[0,1]$, then $I_u=\R$ and $u \in C^2(\R)$ is a classical solution, cf. \cite{MM}. 
Note that in this case we have $u'(\xi) \to 0$ as $\xi \to \pm \infty$.

On the other hand, if $p \ne 2$ and $D \in C^1(0,1)$ is singular or degenerate at 0 and/or 1, then $I_u$ might be an interval of finite length, i.e., $I_u=(\xi_1,\xi_2)$, $\xi_1, \xi_2 \in \R$. Moreover, the derivative $u'(\xi_i)$, $i=1,2$, need not exist. We provide detailed discussion of these cases in Section \ref{s:Asymptotics} based on asymptotic properties of $D$ and $g$ which yield different shapes of traveling wave profile.
\end{remark}

\begin{remark} \label{r:diff-v}
Notice that it follows from Definition \ref{def:sol}\,(a) that the function
\begin{equation*}
	\xi \mapsto D(u(\xi))|u'(\xi)|^{p-2}u'(\xi)
\end{equation*}
belongs to $C^1(I_u)$. In particular, the function $\xi \mapsto |u'(\xi)|^{p-2}u'(\xi)$ also belongs to $C^1(I_u)$ due to our assumption $D \in C^1(0,1)$, $D>0$ in $(0,1)$.
\end{remark}

\medskip

In what follows, we denote
\begin{equation*}
	h_m \coloneqq \min_{u \in [0,1]} h(u)
\end{equation*}
and $p'$ stands for the exponent conjugate to $p$, i.e., $p'=\frac{p}{p-1}$.
Furthermore, we assume 
%that the integral
\begin{equation}
\label{int-Dg-finite}
	\int_0^1 D^{p'-1}(u)g(u)\,\ud u < +\infty.
\end{equation}
%exists finite. 

\begin{theorem}[Nonexistence]
\label{t:nonex}
Let
\begin{equation}
	\label{nonex-condition}
	H(\theta) \geq \theta h_m+\left(p' \int_{0}^1 D^{p'-1}(u) g(u) \,\ud u\right)^\frac{1}{p'}.
\end{equation}
Then the boundary value problem \eqref{2BVP-u} has no solution for any $c>-h_m$.
If strict inequality holds in \eqref{nonex-condition}, there is no solution for any $c \geq -h_m$.
\end{theorem}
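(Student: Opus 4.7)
The plan is to reduce the second-order boundary value problem \eqref{2BVP-u} to a first-order one on the fixed interval $[0,1]$. Once monotonicity $u' < 0$ on $I_u$ is at hand (proved in Section~\ref{s:Reduction}), the decreasing profile $u$ can serve as independent variable and the ``flux'' $y(u) \coloneqq D(u)\,|u'|^{p-1}$ becomes a positive function on $(0,1)$ with $y(0)=y(1)=0$ by Definition~\ref{def:sol}\,(b). Using $u' = -(y/D)^{1/(p-1)}$ and the identity $p'-1 = 1/(p-1)$, a direct computation turns \eqref{2BVP-u} into
\begin{equation*}
\frac{dy}{du} \;=\; c + h(u) \;-\; \frac{g(u)\,D^{p'-1}(u)}{y^{p'-1}},\qquad u \in (0,1).
\end{equation*}

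Next I exploit the combustion structure. On $[0,\theta]$ we have $g\equiv 0$, so $y'(u)=c+h(u)$ integrates explicitly to
\begin{equation*}
y(\theta) \;=\; c\theta + H(\theta).
\end{equation*}
On $[\theta,1]$ I multiply the first-order ODE by $y^{p'-1}$ to obtain the exact-derivative form
\begin{equation*}
\frac{d}{du}\!\left(\frac{y^{p'}}{p'}\right) \;=\; \bigl(c+h(u)\bigr)\,y^{p'-1} \;-\; g(u)\,D^{p'-1}(u),
\end{equation*}
and integrate from $\theta$ to $1$. Using $y(1)=0$ and the vanishing of $g$ on $[0,\theta]$, this yields
\begin{equation*}
\frac{y(\theta)^{p'}}{p'} \;=\; \int_0^1 g(u)\,D^{p'-1}(u)\,du \;-\; \int_\theta^1 \bigl(c+h(u)\bigr)\,y^{p'-1}\,du.
\end{equation*}

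For $c \geq -h_m$ the integrand $(c+h(u))\,y^{p'-1}$ is nonnegative, so combining the two identities gives
\begin{equation*}
c\theta + H(\theta) \;=\; y(\theta) \;\leq\; \left(p'\int_0^1 g(u)\,D^{p'-1}(u)\,du\right)^{1/p'}.
\end{equation*}
If $c > -h_m$, then $c\theta > -h_m\theta$, which yields $H(\theta) < h_m\theta + \bigl(p'\int_0^1 gD^{p'-1}\bigr)^{1/p'}$, contradicting \eqref{nonex-condition}. Under a strict inequality in \eqref{nonex-condition}, already the borderline choice $c = -h_m$ produces $H(\theta) \leq h_m\theta + \bigl(p'\int_0^1 gD^{p'-1}\bigr)^{1/p'}$, again a contradiction, giving the sharper second conclusion of the theorem.

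The main obstacle is not the algebra but making the reduction rigorous at the equilibria. When $D$ degenerates or blows up at $0$ or $1$ and $p\neq 2$, $I_u$ may be bounded and $u'$ need not exist at the endpoints, so one must verify that $y$ extends continuously to $[0,1]$ with the correct vanishing conditions, that the ODE for $y$ is legitimate across all of $(0,1)$, and that each integral above is finite --- the last point being exactly the role of the assumption \eqref{int-Dg-finite}. These technical matters are precisely what Section~\ref{s:Reduction} and Appendix~\ref{ap:A} will prepare, and I would invoke them rather than redo them inside the proof.
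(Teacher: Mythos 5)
Your proposal is correct and follows essentially the same route as the paper: reduce to the first-order problem in the variable $u$, use $g\equiv 0$ on $[0,\theta]$ to get the explicit value $c\theta+H(\theta)$ of the flux at $\theta$, integrate over $(\theta,1)$ with the vanishing condition at $1$ and the sign of $c+h$ to bound that value by $\bigl(p'\int_0^1 D^{p'-1}g\bigr)^{1/p'}$, and contradict \eqref{nonex-condition}. The only difference is notational: you work with the flux $w(u)=D(u)|u'|^{p-1}$, while the paper works with $y=w^{p'}$, so your ``exact-derivative'' step is just the paper's equation \eqref{ODE-y} in disguise.
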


In Section \ref{s:Nonex}, we prove that $c\geq -h(0)$ is a necessary condition for the existence of solutions. An immediate consequence is the following corollary, which addresses the nonexistence of solution for any real value of $c$.

\begin{corollary}
\label{c:nonex}
If strict inequality holds in \eqref{nonex-condition} and $h_m=h(0)$, then \eqref{2BVP-u} has no solution for any $c \in \R$.
\end{corollary}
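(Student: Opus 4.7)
The corollary is essentially a one-line consequence of two ingredients already at hand, so the plan is just to identify them and chain them together.

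The first ingredient is the necessary condition for existence, namely $c \geq -h(0)$, which the authors announce will be established in Section~\ref{s:Nonex} en route to Theorem~\ref{t:nonex}. The second ingredient is the strict-inequality part of Theorem~\ref{t:nonex}: whenever strict inequality holds in \eqref{nonex-condition}, the boundary value problem \eqref{2BVP-u} admits no solution for any $c \geq -h_m$.

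The combination step is the following. Suppose, for contradiction, that \eqref{2BVP-u} has a solution for some $c \in \mathbb{R}$. By the necessary condition just cited, $c \geq -h(0)$. Invoking the standing hypothesis $h_m = h(0)$, this is the same as $c \geq -h_m$. But the strict-inequality version of Theorem~\ref{t:nonex}, which applies by assumption, rules out the existence of solutions for any such $c$. This contradiction forces the conclusion that no $c \in \mathbb{R}$ admits a solution.

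There is no real obstacle here; the only substantive content is the observation that the two ingredients fit together precisely because $h_m = h(0)$ closes the gap between the lower bound $-h(0)$ supplied by the necessary condition and the range $[-h_m, +\infty)$ excluded by Theorem~\ref{t:nonex}. Without this equality of minima, the interval $[-h(0), -h_m)$ could in principle still contain admissible speeds, and the corollary would fail to cover all of $\mathbb{R}$. Hence the statement is best read as a clean packaging of Theorem~\ref{t:nonex} under the extra monotonicity-at-zero hypothesis on $h$.
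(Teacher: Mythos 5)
Your proof is correct and follows exactly the route the paper intends: combine the necessary condition $c \geq -h(0)$ established in Section \ref{s:Nonex} with the strict-inequality case of Theorem \ref{t:nonex}, using $h_m = h(0)$ to make the two ranges match. This is the same argument the paper treats as an immediate consequence, so nothing further is needed.
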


Let
\begin{equation}
\label{k-def}
k=k(p)=
\begin{cases}
	\frac{1}{2^{p'-1}-1} & \text{if } 1<p<2, \\
	1 & \text{if } p=2, \\
	\frac{p'}{p'-1+\frac{1+p'(p'-1)^\frac{1}{p'-2}+(p'-1)^\frac{p'}{p'-2}}{\left(1+(p'-1)^\frac{1}{p'-2}\right)^{p'}}} & \text{if } p>2.
\end{cases}
\end{equation}
%where $p'=\frac{p}{p-1}$ is the exponent conjugate.
Then $k=k(p)$ is a continuous function in $(1,+\infty)$ satisfying 
\begin{equation*}
	\lim\limits_{p\to 1+} k(p)=0 \quad \mbox{ and } \quad \lim\limits_{p\to +\infty} k(p)=\frac{1}{2}.
\end{equation*}

\begin{theorem}[Existence]
\label{t:existence}
Let
\begin{equation}
\label{ex-condition}
	H(1) \leq h_m + \left(k(p) \int_{0}^1 D^{p'-1}(u) g(u)\,\ud u\right)^\frac{1}{p'}.
\end{equation}
Then there exists a unique $c=c^*>-h_m$ such that the boundary value problem \eqref{2BVP-u} has a unique (up to translation) solution $u=u(\xi)$. Moreover, the solution $u$ is strictly decreasing on $I_u$ and $c^*$ satisfies
\begin{equation}
\label{estimate-c}
	c^* < \frac{1}{\theta} \left[\left(p'\int_{0}^1 D^{p'-1}(u) g(u)\,\ud u\right)^\frac{1}{p'}-H(\theta)\right]-h_m.
\end{equation}
\end{theorem}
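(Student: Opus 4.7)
The plan is to use the strict monotonicity of $u$ on $I_u$ (established in Section~\ref{s:Reduction}) to convert the second-order BVP \eqref{2BVP-u} into an equivalent singular first-order ODE on $[0,1]$, and then solve the resulting two-point problem by a shooting/continuity argument in the parameter $c$. Concretely, treating $u$ as the independent variable and setting $z(u) := D(u)\bigl(-u'(\xi(u))\bigr)^{p-1} \geq 0$, a routine computation (using Remark~\ref{r:diff-v}) transforms \eqref{2BVP-u} into
\begin{equation*}
\dot{z}(u) = c + h(u) - \frac{D^{p'-1}(u)\, g(u)}{z^{p'-1}(u)}, \qquad u \in (0,1),
\end{equation*}
with boundary conditions $z(0) = z(1) = 0$ (matching Definition~\ref{def:sol}(b)) and $z > 0$ on $(0,1)$. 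Since $g \equiv 0$ on $[0,\theta]$, the ODE reduces there to $\dot{z} = c + h(u)$; integrating from $0$ with $z(0)=0$ gives $z(u) = cu + H(u)$ on $[0,\theta]$, and requiring $z>0$ on $(0,\theta]$ already forces $c > -h_m$.

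On $[\theta,1]$ I consider the terminal-value problem with $z(1)=0$. Existence and uniqueness of a positive solution $z_c$ on $[\theta,1)$ is nontrivial because the right-hand side is singular in $z$; I would construct $z_c$ via a sub/super-solution argument near $u=1$ (exploiting $g(1)=0$ and the integrability assumption \eqref{int-Dg-finite}, with the requisite auxiliary lemmas deferred to Appendix~\ref{ap:B}) and continue it leftwards to $u=\theta$. Matching the two pieces at $u=\theta$ imposes $z_c(\theta) = c\theta + H(\theta)$, so the existence of a t.w.s. is equivalent to finding $c > -h_m$ with $\Phi(c) := z_c(\theta) - c\theta - H(\theta) = 0$. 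A standard comparison argument shows $\Phi$ is continuous and strictly monotone in $c$, yielding uniqueness of $c^*$; the zero itself is then produced by applying the intermediate value theorem after verifying that $\Phi$ changes sign over $(-h_m,\infty)$.

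The main obstacle, and the reason assumption \eqref{ex-condition} involves precisely the exponent-dependent constant $k(p)$ from \eqref{k-def}, lies in obtaining the sharp one-sided bound on $\Phi$. Multiplying the reduced ODE by $z_c^{p'-1}$ and integrating from $\theta$ to $1$ produces an integral identity relating $z_c(\theta)^{p'}$ to $\int_\theta^1 (c+h(u))\, z_c^{p'-1}\,\ud u$ and $\int_\theta^1 D^{p'-1}(u) g(u)\,\ud u$; a carefully optimized Young inequality, whose optimal constant is exactly $k(p)$ (with its piecewise definition in \eqref{k-def} reflecting the two separate optimization regimes $1<p<2$ and $p>2$), is what allows us to absorb the mixed term and conclude that \eqref{ex-condition} is sufficient to make $\Phi$ attain a positive value. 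Finally, the estimate \eqref{estimate-c} follows by inserting $\Phi(c^*)=0$ into the cruder a priori upper bound $z_{c^*}(\theta) < \bigl(p'\int_0^1 D^{p'-1}(u) g(u)\,\ud u\bigr)^{1/p'} - \theta h_m$, obtained in the spirit of the supersolution argument used in the proof of Theorem~\ref{t:nonex}; strict monotonicity of the reconstructed profile $u$ on $I_u$ comes for free from $z>0$ on $(0,1)$.
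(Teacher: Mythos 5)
Your overall scheme is the paper's own: reduce \eqref{2BVP-u} to a first-order problem in the flux variable via monotonicity, integrate explicitly on $(0,\theta)$ where $g\equiv 0$, shoot backwards from $u=1$, match at $u=\theta$, use monotone dependence on $c$ for uniqueness of the speed, and obtain the threshold from an integral identity on $(\theta,1)$ combined with a sharp algebraic inequality whose optimal constant is $k(p)$ (this is exactly the content of Lemmas \ref{l:tech} and \ref{l:y<H}); the speed estimate comes from the same identity. The one structural difference is that you stay with $z=D(u)(-u')^{p-1}$, whereas the paper sets $y=z^{p'}$, which turns the singular term $f/z^{p'-1}$ into $p'\left[(c+h(u))(y^{+})^{1/p}-f(u)\right]$, one-sided Lipschitz whenever $c+h\geq 0$; this substitution, together with the preliminary shift $h\mapsto h-h_m$, $c\mapsto c+h_m$, is precisely what gives global existence, uniqueness, continuous dependence on $c$ and the comparison principle (cited from \cite{DZ22}), i.e.\ exactly the properties you invoke when you write that ``a standard comparison argument shows $\Phi$ is continuous and strictly monotone.'' In your formulation these properties concern a terminal value problem with a right-hand side singular as $z\to 0+$ at the very terminal point, and they are deferred to an unspecified sub/supersolution construction; this is the technical heart of the existence proof, not a routine step. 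The shift is also not cosmetic: the paper's key positivity lemma needs $h\geq 0$ (so that $H$ is nondecreasing) to control the mixed term via the mean value theorem, and hypothesis \eqref{ex-condition} is exactly the $h_m=0$ condition written for $\tilde H(u)=H(u)-h_m u$; your sketch of the ``optimized Young inequality'' step does not address sign-changing $h$ for $c$ near $-h_m$.

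Two specific claims are wrong or unjustified. First, positivity of $z(u)=cu+H(u)$ on $(0,\theta]$ does not force $c>-h_m$: it only forces $c\geq -\inf_{u\in(0,\theta]}H(u)/u$, which can be strictly smaller than $-h_m$ when the minimum of $h$ is attained in $(\theta,1)$; in the paper, $c^*>-h_m$ is an output of the construction for the shifted problem ($\tilde c^*>0$), and the necessary condition proved separately is $c\geq -h(0)$. Second, your a priori bound $z_{c^*}(\theta)<\bigl(p'\int_0^1 D^{p'-1}(u)g(u)\,\ud u\bigr)^{1/p'}-\theta h_m$ does not follow from the integral identity: using $c^*+h>0$ and $z_{c^*}>0$ on $(\theta,1)$ the identity yields only $z_{c^*}^{p'}(\theta)<p'\int_0^1 D^{p'-1}(u)g(u)\,\ud u$, hence $c^*<\frac{1}{\theta}\bigl[\bigl(p'\int_0^1 D^{p'-1}(u)g(u)\,\ud u\bigr)^{1/p'}-H(\theta)\bigr]$; the paper reaches \eqref{estimate-c} by applying the $h_m=0$ case to the shifted data $\tilde h,\tilde H,\tilde c$, while your shortcut would require a genuine lower bound on $\int_\theta^1(c^*+h(u))z_{c^*}^{p'-1}(u)\,\ud u$ that you do not have. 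Neither point sinks the strategy, but both need repair, and the well-posedness/comparison package for the singular equation must either be proved or obtained by the substitution $y=z^{p'}$ as in the paper.
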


\medskip

Clearly, if $h_m \leq 0$ it follows from the above theorem that the unique wave speed $c^*$ is positive. The following result concerns the existence of a positive wave speed in the case $h_m>0$.

\begin{theorem}[Positive wave speed $c$]
\label{t:existence-c>0}
If $h(u)> 0$, $u \in [0,1]$, and
\begin{equation}
	\label{ex-condition-c>0}
	H(1) \leq \left(k(p) \int_{0}^1 D^{p'-1}(u) g(u)\,\ud u\right)^\frac{1}{p'},
\end{equation}
then $c^*>0>-h_m$.	
\end{theorem}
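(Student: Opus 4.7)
The plan is to use the first-order reduction from Section~\ref{s:Reduction} to squeeze a strictly positive lower bound out of the wave speed $c^*$ supplied by Theorem~\ref{t:existence}. Since $h(u)>0$ on $[0,1]$ gives $h_m>0$, condition \eqref{ex-condition-c>0} is strictly stronger than \eqref{ex-condition}, and so Theorem~\ref{t:existence} yields a unique $c^*>-h_m$; as $-h_m<0$, the only real issue is to upgrade this to $c^*>0$.

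For this I would work with the reduction function $\psi(u):=D(u)|u'|^{p-1}$, viewed as a function of $u\in[0,1]$ along the strictly decreasing profile. The boundary condition $\psi(0)=0$ (forced by Definition~\ref{def:sol}\,(b)) together with $g\equiv 0$ on $[0,\theta]$ gives $\psi(u)=c^*u+H(u)$ on $[0,\theta]$, while on $[\theta,1]$ the function $\psi$ solves
\[
\frac{d\psi}{du}=c^*+h(u)-\frac{g(u)D(u)^{p'-1}}{\psi(u)^{p'-1}},\qquad \psi(\theta)=c^*\theta+H(\theta),\ \psi(1)=0.
\]
Because the last term on the right is nonnegative, integrating $\psi'\le c^*+h$ from $\theta$ yields the pointwise upper bound $\psi(u)\le c^*u+H(u)$ on $[\theta,1]$. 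On the other hand, multiplying the ODE by $p'\psi(u)^{p'-1}$ and integrating from $\theta$ to $1$ produces the energy-type identity
\[
(c^*\theta+H(\theta))^{p'}+p'\int_\theta^1(c^*+h(u))\psi(u)^{p'-1}\,du=p'\int_0^1 g(u)D(u)^{p'-1}\,du.
\]

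Feeding the upper bound $\psi\le c^*u+H(u)$ into the integrand (legitimate since $c^*+h\ge 0$) and using the telescoping identity $p'(c^*+h(u))(c^*u+H(u))^{p'-1}=\frac{d}{du}(c^*u+H(u))^{p'}$, the middle term collapses and the identity reduces to
\[
p'\int_0^1 g(u)D(u)^{p'-1}\,du\le (c^*+H(1))^{p'}.
\]
Taking the $p'$-th root and inserting \eqref{ex-condition-c>0} then gives
\[
c^*\ge\bigl[(p')^{1/p'}-k(p)^{1/p'}\bigr]\left(\int_0^1 D(u)^{p'-1}g(u)\,du\right)^{1/p'}.
\]
Since $p'>1$ we have $(p')^{1/p'}>1$, while \eqref{k-def} gives $k(p)\le 1$, so the bracket is strictly positive and $c^*>0$ follows. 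The one delicate piece is the verification $k(p)\le 1$ for the $p>2$ branch of \eqref{k-def}, which boils down to the elementary inequality $(1+a)^{p'}\le 1+p'a+a^{p'}$ for $a>0$ and $p'\in(1,2)$; this in turn is a consequence of the subadditivity $(1+a)^{p'-1}\le 1+a^{p'-1}$.
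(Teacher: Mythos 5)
Your argument is correct, but it takes a genuinely different route from the paper. The paper's proof of Theorem \ref{t:existence-c>0} is essentially a remark: since $h\geq 0$, the construction in the proof of Theorem \ref{t:existence} for $h_m=0$ (shooting in the parameter $c\geq 0$ for the initial value problem \eqref{bivp-comb}, with $c^*$ defined as a supremum over positive $c$) can be repeated verbatim without the shift $\tilde h=h-h_m$, because Lemma \ref{l:y<H} only needs $h\geq 0$ and hypothesis \eqref{ex-condition-c>0} is exactly \eqref{eq_lemma}; positivity of $c^*$ then comes for free from the construction. You instead take the existence of $c^*>-h_m$ from Theorem \ref{t:existence} (legitimate, since $h_m>0$ makes \eqref{ex-condition-c>0} stronger than \eqref{ex-condition}) and extract a quantitative lower bound from the first-order problem: in the paper's notation your $\psi$ is $y_{c^*}^{1/p'}$, your ``energy identity'' is the integrated form of \eqref{ODE-y} (cf.\ \eqref{y_c(theta)<p'int}), the value $\psi(\theta)=c^*\theta+H(\theta)$ is \eqref{y-theta2}, and the comparison $\psi(u)\leq c^*u+H(u)$ on $[\theta,1]$ follows from $\psi'\leq c^*+h$; combining these with $c^*+h\geq 0$ and the telescoping derivative of $(c^*u+H(u))^{p'}$ indeed yields
\begin{equation*}
	c^* \;\geq\; \left(p'\int_0^1 D^{p'-1}(u)g(u)\,\ud u\right)^{1/p'}-H(1)
	\;\geq\; \left[(p')^{1/p'}-k(p)^{1/p'}\right]\left(\int_0^1 D^{p'-1}(u)g(u)\,\ud u\right)^{1/p'}>0,
\end{equation*}
where the last strict inequality uses $\int_0^1 D^{p'-1}g>0$ (finite by \eqref{int-Dg-finite}) and $k(p)\leq 1<p'$; your verification of $k(p)\leq 1$ for $p>2$ is also fine, and in fact follows even more directly from the paper's Lemma \ref{l:tech}\,(ii), since there $\hat k(p')=\max_{t\geq 0}f(t)\geq f(0)=1$, which gives $k(p)=p'/(p'-1+\hat k(p'))\leq 1$. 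What your approach buys is an explicit lower bound on the wave speed, a natural counterpart to the upper estimate \eqref{estimate-c} (and, for $p=2$, $D\equiv1$, $h\equiv0$, it reduces to the classical bound $c^{*2}\geq 2\int_\theta^1 g$), at the cost of invoking the full strength of Theorem \ref{t:existence}; the paper's argument is shorter given the machinery already in place and avoids any extra computation, but produces no quantitative information beyond $c^*>0$.
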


\begin{remark}
The expression for $k(p)$ given in \eqref{k-def} is an optimal value of the constant in Theorems \ref{t:existence} and \ref{t:existence-c>0}. For $p=2$ it coincides with the estimates derived in \cite{MM} by a different approach.
\end{remark}

\section{Reduction to a first order problem}
\label{s:Reduction}

In this section, we establish our main tool for investigating the existence and nonexistence of solutions to \eqref{2BVP-u}. In particular, we show that \eqref{2BVP-u} can be transformed into a first-order boundary value problem.
%if the profile $u$ is strictly decreasing in some open interval (and possibly constant outside of it). 
First, we prove that each solution to \eqref{2BVP-u} is a decreasing function in $I_u$.

\begin{proposition}
\label{p:prop-u}
Let $u$ be a solution of \eqref{2BVP-u}. There exist $-\infty\leq \xi_1<\xi_2\leq +\infty$ such that
$u \equiv 1$ in $(-\infty,\xi_1]$, $u \equiv 0$ in $[\xi_2,+\infty)$ and
$u'(\xi)<0$ for any $\xi \in (\xi_1,\xi_2)$.
\end{proposition}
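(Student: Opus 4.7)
The plan is to introduce the auxiliary function
\[
\phi(\xi) := D(u(\xi))\,|u'(\xi)|^{p-2}u'(\xi),
\]
which by Definition~\ref{def:sol}(b) is continuous on $\R$, by Remark~\ref{r:diff-v} is of class $C^1$ on $I_u$ with $\phi' = -(c+h(u))u' - g(u)$ there, and has the same sign as $u'$. The central tool is that on any sub-interval of $I_u$ where $u$ takes values in $[0,\theta]$ (so $g\equiv 0$), the equation integrates to the first integral
\[
\phi(\xi) + H(u(\xi)) + c\,u(\xi) = \text{const},
\]
expressing $\phi$ as a function of $u$ alone on such intervals. The goal is to show that $u$ has no interior extrema or plateaus with values in $(0,1)$, which then forces the asserted structure.

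First I would rule out interior local minima of $u$. If $\xi_0 \in I_u$ is such a minimum, then $u'(\xi_0)=0$, so $\phi(\xi_0)=0$ and $\phi'(\xi_0) = -g(u(\xi_0)) \le 0$. When $u(\xi_0) > \theta$, this strict inequality contradicts $\phi'(\xi_0) \ge 0$ needed at a minimum. When $u(\xi_0) \in (0,\theta]$, I would pick $\xi_L < \xi_0 < \xi_R$ nearby with $u(\xi_L)=u(\xi_R)$ and $u'(\xi_L)<0<u'(\xi_R)$; the first integral forces $\phi(\xi_L)=\phi(\xi_R)$, contradicting their opposite signs. Next, an interior local maximum with $u(\xi_0)<1$ is impossible: since $u$ decreases immediately to the left of $\xi_0$ yet must satisfy $u(-\infty)=1$, an interior local minimum would have to appear further left, contradicting what was just proved. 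Finally, plateaus $u\equiv\alpha\in(0,1)$ on a non-degenerate sub-interval are excluded by cases. If $\alpha>\theta$, the equation on the plateau gives $\phi'\equiv-g(\alpha)\neq 0$ while $\phi\equiv 0$, a contradiction. If $\alpha\le\theta$, a first-order expansion of the first integral on either side of the plateau's endpoints yields the contradictory sign conditions $c+h(\alpha)>0$ (from the side where $u>\alpha$) and $c+h(\alpha)<0$ (from the side where $u<\alpha$).

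With interior extrema and plateaus excluded, I would set $\xi_1 := \sup\{\xi\in\R : u(\xi)=1\}$ (or $-\infty$ if empty) and $\xi_2 := \inf\{\xi\in\R : u(\xi)=0\}$ (or $+\infty$ if empty). Continuity of $u$ together with the boundary conditions then yields $u\equiv 1$ on $(-\infty,\xi_1]$, $u\equiv 0$ on $[\xi_2,+\infty)$, and strict monotonicity $u'<0$ on $(\xi_1,\xi_2)$. The main obstacle is the plateau case at heights $\alpha\in(0,\theta]$: because $g$ vanishes there, a locally constant profile satisfies the ODE trivially, and its exclusion rests entirely on the first integral together with the careful matching of $\phi$ across the plateau's boundary.
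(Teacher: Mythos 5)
Your exclusion of interior extrema and plateaus only covers heights lying strictly inside $(0,1)$, and that is not enough to reach the conclusion: the dangerous configurations are turn-arounds at the equilibria themselves. Since $g$ vanishes at $0$ and $1$ and $D$ may degenerate there, nothing local prevents a profile that equals $1$ near $-\infty$, descends to $0$, runs along $0$, climbs back up to $1$, and only later descends again; such a profile satisfies the boundary conditions, has no local extremum with value in $(0,1)$ and no plateau at a height in $(0,1)$, yet violates the proposition (and with your definitions one would have $\xi_2<\xi_1$ and $u\not\equiv 1$ on $(-\infty,\xi_1]$). So your last step, ``continuity plus the boundary conditions then yields the asserted structure,'' assumes exactly what still has to be proved. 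The paper closes this gap with a global integral argument: integrating the equation over $\R$ and using parts (b), (c) of Definition \ref{def:sol} gives $c+H(1)=\int_{-\infty}^{+\infty} g(u(\zeta))\,\ud\zeta>0$, whereas integrating over a rising segment joining $u=0$ to $u=1$ gives $c+H(1)=-\int_{\underline{\xi}}^{\bar{\xi}} g(u(\zeta))\,\ud\zeta<0$, a contradiction. Some argument of this kind (ruling out monotone increasing excursions between the equilibria) is indispensable and is entirely missing from your proposal.

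Two smaller points. First, your first-integral arguments for minima and plateaus at heights in $(0,\theta]$ break down at the borderline height $\theta$: near a minimum at value $\theta$ the profile lies in $[\theta,1)$, where $g\geq 0$ but not $\equiv 0$, so $\phi+cu+H(u)$ is only nonincreasing rather than constant (this is repairable, but as written the case is not covered). Second, in the plateau case $\alpha\leq\theta$ the sign information you extract on the two sides passes to the limit only as $c+h(\alpha)\geq 0$ and $c+h(\alpha)\leq 0$, which is not a contradiction (it merely forces $c+h(\alpha)=0$); to genuinely exclude a plateau, and more generally any zero of $u'$ at a height in $(0,\theta]$, the paper instead invokes a uniqueness argument for the integrated first-order equation (Lemma \ref{l:A1}, using Lipschitz or one-sided Lipschitz continuity of $\mathcal{S}_{p'}$), which shows that $u$ would have to be constant on a half-line and thus violate condition (c); this also handles the non-Lipschitz regime $p>2$, which your sign analysis does not address.
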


\begin{proof}
First, we show that the derivative of a solution to \eqref{2BVP-u} does not vanish in the set $I_u=\{\xi \in \R: 0<u(\xi)<1\}$. Indeed, let $\xi_0 \in I_u$ be such that $0<u(\xi_0) \leq \theta$. If $u'(\xi_0)=0$ then it follows from Lemma \ref{l:A1} that the boundary conditions in \eqref{2BVP-u} are not satisfied, a contradiction. Now consider $\xi_0 \in I_u$, $\theta < u(\xi_0)<1$ with $u'(\xi_0)=0$.
Then 
\[
\left.\left(D(u(\xi)|u'(\xi)|^{p-2}u'(\xi))\right)'\right|_{\xi=\xi_0}=-g(u(\xi_0))<0.
\]
It follows from Lemma \ref{l:A2} that $\xi_0$ must be the point of strict local maximum of $u$ and therefore $\lim\limits_{\xi \to -\infty} u(\xi) \ne 1$, again a contradiction. 

Next we prove that $u'(\xi)<0$ for all $\xi \in I_u$, i.e., the solution cannot ``switch'' from 0 to 1 and back again finitely many times (while still satisfying the boundary conditions).
To this end, we observe that $c>-H(1)$ is a necessary condition for the existence of solution to \eqref{2BVP-u}. Indeed, integrating the equation in \eqref{2BVP-u} we obtain
\begin{equation*}
\begin{split}
D(u(\xi)) |u'(\xi)|^{p-2} u'(\xi) &{}- D(u(\hat{\xi}))|u'(\hat{\xi})|^{p-2}u'(\hat{\xi})
+
c(u(\xi)-u(\hat{\xi})) \\
&+H(u(\xi))-H(u(\hat{\xi})) 
+ \int_{\hat{\xi}}^{\xi} g(u(\zeta))\,\ud \zeta = 0, \quad \xi,\hat{\xi} \in \R.	
\end{split}
\end{equation*}
Passing to the limits $\xi \to +\infty$, $\hat{\xi} \to -\infty$ and taking into account parts (b) and (c) of Definition \ref{def:sol} yields
\begin{equation*}
c+H(1)-H(0)=\int_{-\infty}^{+\infty} g(u(\zeta))\,\ud\zeta.
\end{equation*}
Since $H(0)=0$ and the integral on the left-hand side is positive, we conclude that $c>-H(1)$.

Suppose that there exist $\underline{\xi}, \bar{\xi} \in \R$ such that $u(\underline{\xi})=0$, $u(\bar{\xi})=1$ and $u'(\xi)>0$ for all $\xi \in (\underline{\xi},\bar{\xi})$.
Integrating the equation in \eqref{2BVP-u} from $\underline{\xi}$ to $\bar{\xi}$ and employing the same arguments as above, we arrive at
\begin{equation*}
c+H(1) 
= - \int_{\underline{\xi}}^{\bar{\xi}} g(u(\zeta))\,\ud \zeta < 0,	
\end{equation*}
i.e., $c<-H(1)$, a contradiction.

Therefore, 
%the set $I_u$ is an open interval and $u'(\xi)<0$ for all $\xi \in I_u$. 
there exist $-\infty\leq \xi_1<\xi_2\leq +\infty$ such that
$u \equiv 1$ in $(-\infty,\xi_1]$, $u \equiv 0$ in $[\xi_2,+\infty)$ and
$u'(\xi)<0$ for any $\xi \in (\xi_1,\xi_2)$.
%Due to the lack of uniqueness for the associated initial value problem at $u=0$ and $u=1$, this interval can be finite.
This concludes the proof.
\end{proof}

In particular, it follows from Proposition \ref{p:prop-u} that $I_u=(\xi_1,\xi_2)$ is an open interval, bounded or unbounded.

Following substitutions from \cite[p.\,174]{Enguica}, we set
\begin{equation}
	\label{w(u)}
	-w(u) \coloneqq D(u)|u'|^{p-2}u'.
\end{equation}
Since $u'(\xi)<0$ for all $\xi \in (\xi_1,\xi_2)$, we have $w=w(u)>0$ in $(0,1)$ and $w$ satisfies
\begin{equation*}
	\frac{1}{p' D^{p'-1}(u)} \frac{\ud }{\ud u} w^{p'}(u) - (c+h(u)) \left(\frac{w(u)}{D(u)}\right)^{p'-1}+g(u)=0, \quad u \in (0,1).
\end{equation*}
%where $p'=\frac{p}{p-1}$ is the exponent conjugate. 
Put
\begin{equation*}
	y(u) \coloneqq w^{p'}(u) >0.
\end{equation*}
Then $y=y(u)$ solves the equation
\begin{equation}
	\label{ODE-y}
	y'(u)=p'\left[(c+h(u))(y^+(u))^\frac{1}{p}-f(u)\right], \quad u \in (0,1),
\end{equation}
where $y'=\frac{\ud y}{\ud u}$, $y^+(u)\coloneqq \max\{y(u),0\}$ and
\begin{equation}
\label{def-f}
f(u)\coloneqq D^{p'-1}(u)g(u).
\end{equation}
%$f(u)\coloneqq D^{p'-1}(u)g(u)$. 
In terms of $y$, part (b) of Definition \ref{def:sol} translates to
%Moreover, $\lim\limits_{\xi \to \xi_1} u(\xi)=1$, $\lim\limits_{\xi \to \xi_2} u(\xi)=0$ and $D>0$ in $[0,1]$ lead to
%\begin{equation*}
%	\lim\limits_{\xi \to \xi_1} u'(\xi)=\lim\limits_{\xi \to \xi_2} u'(\xi)=0.
%\end{equation*}
%In terms of $y$, this translates to 
\begin{equation}
	\label{y-01}
	y(0) \coloneq \lim\limits_{u\to 0+} y(u)=0, \quad y(1) \coloneq \lim\limits_{u\to 1-} y(u)=0.
\end{equation}
It follows from \eqref{w(u)} that
\begin{equation*}
	\frac{\partial \xi}{\partial u} = -\left(\frac{D(u)}{w(u)}\right)^{p'-1} 
\end{equation*}
and therefore, since $u(0)=\theta$,
\begin{equation}
	\label{xi}
	\xi(u)= -\int_{\theta}^{u} \left(\frac{D(s)}{w(s)}\right)^{p'-1}\,\ud s 
	= -\int_{\theta}^{u} \frac{D^{p'-1}(s)}{y^{\frac{1}{p}}(s)}\,\ud s, \quad u \in (0,1).
\end{equation}
Since $\xi=\xi(u)$ maps $(0,1)$ onto $(\xi_1,\xi_2)$, we have
\begin{equation}
	\label{xi1-xi2}
	\xi_1=-\int_{\theta}^{1} \frac{D^{p'-1}(s)}{y^\frac{1}{p}(s)}\,\ud s
	\quad \mbox{ and } \quad 
	\xi_2=\int_{0}^{\theta} \frac{D^{p'-1}(s)}{y^\frac{1}{p}(s)}\,\ud s.
\end{equation}
It follows from the above calculations that the existence of a monotone solution to \eqref{2BVP-u} implies the existence of a positive solution to \eqref{ODE-y}, \eqref{y-01} which, in addition, satisfies \eqref{xi1-xi2}.
%Actually, it is possible to 
Proceeding as in \cite[Proposition 3.3]{DZ22} where $h \equiv 0$, it can be shown that these problems are equivalent.
%show that these problems are equivalent, cf. \cite{DrZa-tw}.
%This allows us to prove the existence and nonexistence results from Section \ref{s:mainresults} by investigating positive solutions to \eqref{ODE-y}, \eqref{y-01}.
We thus have the following assertion.

\begin{proposition} \label{p:equiv}
Let \eqref{int-Dg-finite} hold. A function $u:\R \to [0,1]$ is a unique solution (up to translation) of \eqref{2BVP-u} in the sense of Definition \ref{def:sol} if and only if $y:[0,1] \to \R$ is a unique positive solution of \eqref{ODE-y}, \eqref{y-01}.
\end{proposition}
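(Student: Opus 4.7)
The plan is to prove the two implications separately, leveraging the calculations already carried out in the section and adapting the strategy of \cite[Proposition 3.3]{DZ22} to accommodate the convective term $h$.

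For the forward direction, I would use that Proposition \ref{p:prop-u} guarantees $u$ is strictly decreasing on $I_u=(\xi_1,\xi_2)$, so it admits a $C^1$ inverse $\xi=\xi(u)$ on $(0,1)$. Setting $w(u)=-D(u)|u'|^{p-2}u'$ and $y(u)=w^{p'}(u)$, a direct computation that relies on Remark \ref{r:diff-v} yields \eqref{ODE-y}, while part (b) of Definition \ref{def:sol} translates into \eqref{y-01}. Positivity of $y$ on $(0,1)$ follows from $u'<0$ in $I_u$, and uniqueness of $u$ modulo translation transfers to uniqueness of $y$ because the shift $\xi\mapsto\xi-\xi_0$ disappears under the change of variables $\xi\mapsto u$.

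For the converse, given a positive $y$ solving \eqref{ODE-y}, \eqref{y-01}, I would define $\xi\colon(0,1)\to\R$ by \eqref{xi}. Since $y>0$ on $(0,1)$, this map is strictly decreasing onto its image $(\xi_1,\xi_2)$, with $\xi_1,\xi_2$ as in \eqref{xi1-xi2} (possibly infinite). Reversing the substitutions that led to \eqref{ODE-y} shows that the inverse $u$ satisfies the ODE in \eqref{2BVP-u} classically on $I_u$. If $\xi_1>-\infty$ (respectively $\xi_2<+\infty$), I would extend $u$ by the constant value $1$ (respectively $0$), so that part (c) of Definition \ref{def:sol} is automatic in all cases. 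Part (a) then holds on the open set $I_u$ by construction, and part (b) amounts exactly to the limits in \eqref{y-01}, which are assumed.

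The main obstacle will be verifying that this extended $u$ is genuinely admissible at any finite endpoint $\xi_i$: specifically, that the flux $\xi\mapsto D(u(\xi))|u'(\xi)|^{p-2}u'(\xi)$ is continuous across $\xi_i$ with the value zero, so that the constant extension can be attached without breaking (b). Here the assumption \eqref{int-Dg-finite} is essential, as combined with the ODE \eqref{ODE-y} it controls $y$ near $u=0$ and $u=1$ and forces the required decay. This analysis parallels \cite[Proposition 3.3]{DZ22}; the only novelty lies in carrying along the factor $c+h(u)$ in place of the constant $c$, which is a bounded continuous term and therefore does not alter the integrability considerations. The uniqueness assertion then follows from the bijective nature of the correspondence: the one-parameter family of translates of $u$ collapses under $u\mapsto y(u)$ to a single $y$, while the normalization $u(0)=\theta$ recovers a unique representative of the translation class from any given $y$.
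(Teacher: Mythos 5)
Your proposal is correct and follows essentially the same route as the paper: the paper's own treatment consists precisely of the substitutions $w(u)=-D(u)|u'|^{p-2}u'$, $y=w^{p'}$ together with the formula \eqref{xi} for $\xi(u)$, with the converse direction (including constant extension past finite $\xi_1,\xi_2$) delegated to the argument of \cite[Proposition 3.3]{DZ22} with $c$ replaced by $c+h(u)$, exactly as you outline. The only small imprecision is in attributing the vanishing of the flux at a finite endpoint to \eqref{int-Dg-finite}: since the flux equals $-y^{1/p'}(u(\xi))$, its decay is immediate from \eqref{y-01}, while \eqref{int-Dg-finite} serves to make $f=D^{p'-1}g$ integrable so that the first-order problem is well posed, which does not affect the validity of your plan.
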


\section{Proof of nonexistence results}
\label{s:Nonex}

Due to Proposition \ref{p:equiv},
to prove the nonexistence results, it suffices to show that the first-order boundary value problem \eqref{ODE-y}, \eqref{y-01} does not admit positive solutions for the given values of $c$.
First, we notice that
\begin{equation}
	\label{c-NC}
	c\geq -h(0)
\end{equation}
is a necessary condition for the existence of a positive solution of \eqref{ODE-y}, \eqref{y-01}. Indeed, if $c<-h(0)$ then, by the continuity of $h$, there exists $\delta>0$ such that $c<-h(u)$ for all $u\in[0,\delta]$. Integrating the equation \eqref{ODE-y} over $[0,\delta]$ and taking into account $y_c(0)=0$ together with $c+h(u)<0$ in $[0,\delta]$, we arrive at
\begin{equation*}
	y_c(\delta)=p'\int_0^\delta (c+h(u))(y_c^+(u))^\frac{1}{p}\,\ud u <0,
\end{equation*}
a contradiction with the positivity of solution $y_c=y_c(u)$.

\medskip
\noindent
\textbf{Proof of Theorem \ref{t:nonex}.}
We proceed by contradiction and assume that $c>-h_m$ and $y_c=y_c(u)$ is a positive solution of \eqref{ODE-y}, \eqref{y-01}.
Integrating the equation \eqref{ODE-y} over $(\theta,1)$ and using \eqref{y-01} yields
\begin{equation}
	\label{y-theta}
	y_c(\theta)=-p'\int_{\theta}^1 (c+h(\sigma))\left(y_c(\sigma)\right)^\frac{1}{p}\,\ud \sigma 
	+
	p'\int_{\theta}^1 f(\sigma)\,\ud \sigma 
	<
	p' \int_{0}^1 f(\sigma)\,\ud \sigma,
\end{equation}
where $f$ is given by \eqref{def-f}.
%$f(u)=D^{p'-1}(u)g(u)$.
On the other hand, since $f \equiv 0$ on $(0,\theta)$ the equation \eqref{ODE-y} is separable on $(0,\theta)$. Using \eqref{y-01} we obtain
\begin{equation}
	\label{y-theta2}
	y_c^{\frac{1}{p'}}(\theta)=c\theta+H(\theta).
\end{equation}
It follows from \eqref{y-theta}, \eqref{y-theta2} and the condition \eqref{nonex-condition} that
\begin{equation*}
	\left(p' \int_0^1 f(\sigma)\,\ud \sigma\right)^\frac{1}{p'}
	> y_c^{\frac{1}{p'}}(\theta)=c\theta+H(\theta)>-h_m\theta + H(\theta)
	\geq \left(p' \int_0^1 f(\sigma)\,\ud \sigma\right)^\frac{1}{p'},
\end{equation*}
a contradiction.

Assuming strict inequality in \eqref{nonex-condition} and $c \geq -h_m$, we would arrive at
\begin{equation*}
	\left(p' \int_0^1 f(\sigma)\,\ud \sigma\right)^\frac{1}{p'}
	\geq y_c^{\frac{1}{p'}}(\theta)=c\theta+H(\theta) \geq -h_m\theta + H(\theta)
	> \left(p' \int_0^1 f(\sigma)\,\ud \sigma\right)^\frac{1}{p'},
\end{equation*}
again a contradiction. 
This concludes the proof.
\qed

\section{Proof of existence results}
\label{s:Ex}

\textbf{Proof of Theorem \ref{t:existence}.}
We first prove the statement of Theorem \ref{t:existence} assuming that $h_m=0$, i.e., we will show that if 
\begin{equation*}
	H(1) \leq \left(k(p) \int_{0}^1 D^{p'-1}(u) g(u)\,\ud u\right)^\frac{1}{p'},
\end{equation*}
then there exists a unique positive value $c=c^*$ for which \eqref{2BVP-u} admits a solution.
This result can then be applied to the case of a more general $h \in C[0,1]$ with $h_m \ne 0$ by means of a suitable shift, 
as discussed at the end of this section.

Thanks to the equivalence established in Proposition \ref{p:equiv}, we proceed by investigating the initial value problem
\begin{equation}
\label{bivp-comb}
\begin{cases}
	y'_c(u)=p'\left[(c+h(u))(y_c^+(u))^\frac{1}{p}-f(u)\right], \quad u \in (0,1), \\
	y_c(1)=0,
\end{cases}	
\end{equation}
where $f$ is given by \eqref{def-f}.
%$f(u)=D^{p'-1}(u)g(u)$.
Let $c \geq 0$. Since $c+h(u) \geq 0$ for all $u \in [0,1]$, the function
\begin{equation*}
	y \mapsto (c+h(u))(y^+)^\frac{1}{p}, \quad u \in [0,1],
\end{equation*}
satisfies one-sided Lipschitz condition and it follows from \cite[Lemma 4.1 and Lemma 4.3]{DZ22}, where we replace $c$ by $c+h(u)$, that \eqref{bivp-comb} has a unique global solution $y_c=y_c(u)$ defined on $[0,1]$. Our aim is to show that there exists $c>0$ such that $y_c(u)>0$ if $u \in (0,1)$ and $y_c(0)=0$.

First, let us observe that $f(u)>0$ in $(\theta,1)$ implies that
\begin{equation}
\label{y_c>0-near1}
	y_c(u)>0 \,\, \mbox{ for }\,\, u \in (\theta,1),
\end{equation}
and
\begin{equation}
\label{y_c(theta)<p'int}
	y_c(\theta)=-p' \int_{\theta}^1 (c+h(\sigma))(y_c(\sigma))^\frac{1}{p} \,\ud \sigma +p'\int_{\theta}^1 f(\sigma)\,\ud \sigma
	< p'\int_0^1 f(\sigma)\,\ud\sigma.
\end{equation}

According to Lemma \ref{l:y<H}, for any $p>1$ we have
\begin{equation}
\label{yH-theta}
	y_0^\frac{1}{p'}(\theta) > H(\theta). 
\end{equation}
In particular, $y_0(\theta)>0$ and hence there exists $0<\delta\leq\theta$ such that $y_0(u)>0$ for $u \in (\theta-\delta,\theta)$. 
Since $f \equiv 0$ on $(0,\theta)$, $y_0=y_0(u)$ solves the equation
\begin{equation*}
	y_0'(u)=p'h(u)(y_0(u))^\frac{1}{p}, \quad u \in (\theta-\delta,\theta).
\end{equation*}
Separating variables, we obtain for $u\in(\theta-\delta,\theta)$
\begin{equation*}
	y_0^\frac{1}{p'}(\theta)-y_0^\frac{1}{p'}(u)=H(\theta)-H(u),
\end{equation*}
i.e.,
\begin{equation*}
	y_0^\frac{1}{p'}(u)-H(u)=y_0^\frac{1}{p'}(\theta)-H(\theta)>0
\end{equation*}
by \eqref{yH-theta}. It follows that $\delta=\theta$ and
\begin{equation*}
	y_0^\frac{1}{p'}(u)>0 \quad \mbox{ for all}\,\, u \in [0,\theta].
\end{equation*}
Therefore, 
\begin{equation}
\label{y0>0}
	y_0(u)>0 \quad \mbox{ for all }\,\, u \in [0,1).
\end{equation}

Set
\begin{equation*}
	c^*\coloneqq \sup \{c>0: y_c(u)>0 \mbox{ for all } u\in(0,1)\}.
\end{equation*}
It follows from \eqref{y_c>0-near1}, \eqref{y0>0} and the continuous dependence of the solution to \eqref{bivp-comb} on the parameter $c$ that the set $\{c>0: y_c(u)>0 \mbox{ for all } u\in(0,1)\}$ is nonempty and $c^*>0$.
If $c^*=+\infty$ then there exist $c_n \to +\infty$ and corresponding $y_{c_n}=y_{c_n}(u)>0$, $u \in (0,1)$, which satisfy
\begin{equation*}
	y'_{c_n}(u)=p'(c_n+h(u))(y_{c_n}(u))^\frac{1}{p}, \quad u \in(0,\theta).
\end{equation*}
Separating variables yields
\begin{equation}
	\label{y_cn_sep}
	\left(y_{c_n}(u)\right)^\frac{1}{p'}=(y_{c_n}(\theta))^\frac{1}{p'}+c_n(u-\theta)+H(u)-H(\theta), \quad u \in (0,\theta),
\end{equation}
and from \eqref{y_c(theta)<p'int} we get
\begin{equation*}
	y_{c_n}(\theta)<p'\int_0^1 f(\sigma)\,\ud\sigma<+\infty.
\end{equation*}
Therefore, the right-hand side in \eqref{y_cn_sep} tends to $-\infty$, a contradiction.
Hence $0<c^*<+\infty$.

Next we prove that $y_{c^*}(u)>0$, $u\in(0,1)$, $y_{c^*}(0)=0$. Indeed, by the continuous dependence of \eqref{bivp-comb} on the parameter $c$ and the definition of $c^*$, the solution $y_{c^*}=y_{c^*}(u)$ must vanish somewhere in the interval $[0,\theta]$. Let $\eta \in [0,\theta]$ be the largest zero of $y_{c^*}$. 
It follows from the comparison argument that solutions of \eqref{bivp-comb} decrease (not strictly) with $c$. This can be easily shown as in \cite[Lemma 4.5 and Corollary 4.6]{DZ22} by replacing $c$ with $c+h(u)$.
If $\eta>0$ then for $c<c^*$ we have $y_c(u) \geq 0$ on $(0,\eta)$ and hence from
\begin{equation*}
	y'_c(u)=p'(c+h(u))(y_c(u))^\frac{1}{p}, \quad u\in (0,\eta),
\end{equation*}
we again deduce
\begin{equation}
	\label{y_c-eta}
	0 \leq  (y_c(u))^\frac{1}{p'}=(y_c(\eta))^\frac{1}{p'}+c(u-\eta)+H(u)-H(\eta).
\end{equation}
Since for $c\to c^*$ we have $y_c(\eta) \to y_{c^*}(\eta)=0$ by continuous dependence on parameter, for any fixed $u\in(0,\eta)$ there exists $c<c^*$, $(c^*-c)$ sufficiently small, such that
\begin{equation*}
	(y_c(\eta))^\frac{1}{p'}+c(u-\eta)+H(u)-H(\eta)<0
\end{equation*}
($h_m=0$ implies that $H$ is nondecreasing), which contradicts \eqref{y_c-eta}. Hence $\eta=0$ and we have $y_{c^*}(u)>0$ in $(0,1)$, $y_{c^*}(0)=0$. By the uniqueness for the initial value problem \eqref{bivp-comb}, this solution is also a unique solution of the boundary value problem \eqref{ODE-y}, \eqref{y-01} with $c=c^*$.

Finally, we show that positive solutions of \eqref{bivp-comb} do not vanish at 0 for values of $c \ne c^*$.
Assume by contradiction that there exists $\hat{c} \ne c^*$ such that $y_{\hat{c}}=y_{\hat{c}}(u)>0$ solves \eqref{bivp-comb} in $(0,1)$, $y_{\hat{c}}(0)=0$. The definition of $c^*$ yields $\hat{c}<c^*$. Separating variables in the equation in \eqref{bivp-comb} on  $(0,\theta)$, we obtain
\begin{equation*}
	y_{\hat{c}}^\frac{1}{p'}(\theta)=\hat{c}\theta+H(\theta)
\end{equation*}
and also 
\begin{equation}
\label{y_c*-theta}
	y_{c^*}^\frac{1}{p'}(\theta)=c^*\theta+H(\theta).
\end{equation}
Hence $y_{\hat{c}}(\theta)<y_{c^*}(\theta)$. On the other hand, the comparison argument applied to \eqref{bivp-comb} yields $y_{\hat{c}}(u)\geq y_{c^*}(u)$, $u\in[0,1]$. 
This follows from \cite[Corollary 4.6]{DZ22} with $c_1=\hat{c}+h(u)$ and $c_2=c^*+h(u)$.
In particular, $y_{\hat{c}}(\theta)\geq y_{c^*}(\theta)$, a contradiction.

It follows from \eqref{y_c*-theta} together with \eqref{y_c(theta)<p'int} that
\begin{equation*}
	c^*=\frac{1}{\theta}\left(y_{c^*}^\frac{1}{p'}(\theta)-H(\theta)\right)<\frac{1}{\theta}\left[\left(p'\int_0^1 D^{p'-1}(u)g(u)\,\ud u\right)^\frac{1}{p'}-H(\theta)\right],
\end{equation*}
i.e., \eqref{estimate-c} holds. This concludes the proof for $h_m=0$.

\medskip

If $h_m \ne 0$, we can consider a new convective velocity $\tilde{h}(u) \coloneqq h(u)-h_m$, $u \in [0,1]$. Then $\tilde{h}_m \coloneqq \min_{u \in [0,1]} \tilde{h}(u)= 0$ and $\tilde{H}(u)=\int_0^u \tilde{h}(s)\,\ud s=H(u)-h_m u$ is a nondecreasing function. Setting $\tilde{c} \coloneqq c+h_m$, the equation in \eqref{2BVP-u} becomes
\begin{equation*}
	\left(D(u)|u'|^{p-2}u'\right)'+(\tilde{c}+\tilde{h}(u))u'+g(u)=0
\end{equation*}
and we can apply the above reasoning to prove the existence of a unique positive value $\tilde{c}^*$ assuming that
\begin{equation*}
	\tilde{H}(1) \leq \left(k(p) \int_{0}^1 D^{p'-1}(u) g(u)\,\ud u\right)^\frac{1}{p'}.
\end{equation*}
Since $\tilde{H}(1)=H(1)-h_m$, we immediately see that condition \eqref{ex-condition} yields a unique value $c^*=\tilde{c}^*-h_m>-h_m$ corresponding to the problem with convective velocity $h$ and the estimate \eqref{estimate-c} holds.
\qed

\medskip
\noindent
\textbf{Proof of Theorem \ref{t:existence-c>0}.}
For $h_m>0$ we can carry out the proof exactly as in the case $h_m=0$, see the proof of Theorem \ref{t:existence}.
In particular, statements concerning the initial value problem \eqref{bivp-comb} remain valid and the positivity of $h$ justifies the use of Lemma~\ref{l:y<H}.
Therefore, if \eqref{ex-condition-c>0} holds we conclude that $c^*>0>-h_m$. 
\qed

\section{Asymptotic analysis of the wave profile}
\label{s:Asymptotics}

In this section, we discuss asymptotic behavior of the solution $u=u(\xi)$ to \eqref{2BVP-u} as $\xi \to \pm \infty$. 
Our aim is to determine whether the solution attains 0 and/or 1 (or neither of them).
To this end, we study the convergence of the integrals from \eqref{xi1-xi2}, and hence the boundedness of the interval $(\xi_1,\xi_2)$.
For technical reasons and for the sake of brevity, we assume power-type behavior of $D$ and $g$ near equilibria 0 and~1.

In what follows, we consider $H(u)>0$, $u \in (0,1]$, and profiles with $c^*>0$. 
For notational brevity, we use the following notation:
for $s_0 \in \R$ we write
\begin{equation*}
	\phi_1(s) \sim \phi_2(s) \, \mbox{ as } s \to s_0
	\quad
	\mbox{if and only if}
	\quad 
	\lim\limits_{s\to s_0} \frac{\phi_1(s)}{\phi_2(s)} \in (0,+\infty).
\end{equation*}

\subsection{Asymptotics near 0}

Let us assume that $D(u) \sim u^\alpha$ as $u \to 0+$ for some $\alpha \in \R$.
Thanks to $f \equiv 0$ in $[0,\theta]$, we have
\begin{equation*}
	y_{c^*}^{\frac{1}{p'}}(u)=c^*u+H(u), \quad u \in (0,\theta),
\end{equation*}
and due to the assumption $H\in C^1[0,1]$, $H(u)>0$ together with $c^*>0$, we have $y_{c^*}^\frac{1}{p'}(u) \sim u$ as $u \to 0+$.
%, i.e., there exists $k \in (0,+\infty)$ such that
%\begin{equation*}
%	\lim_{u\to 0+} \frac{y_{c^*}^\frac{1}{p'}(u)}{u}=k.
%\end{equation*}
%Since $D(u)>0$, $u \in [0,1]$, we conclude that
Let us recall that
\begin{equation*}
	\xi_2=\int_{0}^{\theta} \frac{D^{p'-1}(s)}{y_{c^*}^{1/p}(s)}\,\ud s.
\end{equation*}
Since
\begin{equation}
	\label{as0}
	\int_0^u \frac{D^{p'-1}(s)}{y_{c^*}^{1/p}(s)}\,\ud s \sim \int_0^u \frac{s^{\alpha(p'-1)}}{s^{p'-1}}\,\ud s 
	= \int_0^u s^\frac{\alpha-1}{p-1}\, \ud s \quad \mbox{ as }\,\, u\to 0+,
\end{equation}
we conclude that the following two cases occur:
\begin{itemize}
	\item [(a)]
	$\xi_2=+\infty$ if and only if $p+\alpha \leq 2$;
	\item [(b)]
	$\xi_2<+\infty$ if and only if $p+\alpha>2$,
\end{itemize}
see Figure \ref{f:as0} for geometric interpretation.

\begin{figure}[h!]
	\centering
	\includegraphics[width=8cm]{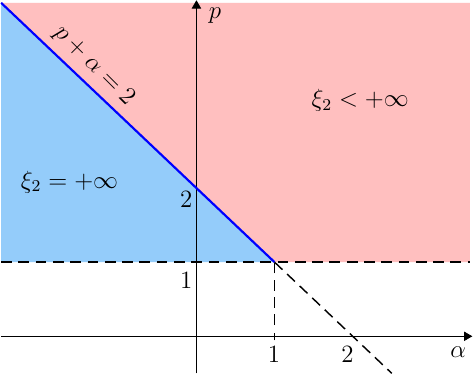}
	\caption{Visualization of cases (a) and (b), leading to $\xi_2$ finite or infinite}
	\label{f:as0}
\end{figure}

Observe that for any $\alpha>1$, the profile $u=u(\xi)$ is always right compactly supported, i.e., $u\equiv 0$ in $[\xi_2,+\infty)$, $\xi_2 \in \R$.
If $\alpha=0$ and $1<p\leq2$, the profile does not attain 0 for any finite $\xi$. This result is consistent with that from \cite{MM} for $p=2$ and $D \in C^1[0,1]$  strictly positive in $[0,1]$.

In case (b), we can also study the one-sided derivative $u'(\xi_2-)$. In particular, differentiating \eqref{xi} yields
\begin{equation*}
	\frac{\ud \xi}{\ud u}
	=
	-\frac{\ud}{\ud u} \int_{\frac{1}{2}}^u \frac{D^{p'-1}(s)}{y_{c^*}^{1/p}(s)}\,\ud s
	=
	- \frac{D^{p'-1}(u)}{y_{c^*}^{1/p}(u)}, \quad u \in (0,1).
\end{equation*}
Since $D(u)\sim u^\alpha$, $y(u) \sim u^{p'}$ as $u\to 0+$, we have
\begin{equation*}
	\left.\frac{\ud \xi}{\ud u}\right|_{u=0+} \sim - u^\frac{\alpha-1}{p-1} \to 
	\begin{cases}
		0\quad &\mbox{if}\quad \alpha > 1\\
		\mathrm{const.}<0 \quad &\mbox{if}\quad \alpha=1 \\
		-\infty \quad &\mbox{if}\quad  \alpha<1
	\end{cases}
\quad \mbox{ as } u \to 0+.
\end{equation*}
From an inverse perspective, we obtain the following classification for the profile $u=u(\xi)$:
\begin{equation*}
	u'(\xi_2-) = 
	\begin{cases}
		-\infty\quad &\mbox{if}\quad \alpha > 1,\\
		\mathrm{const.}<0 \quad &\mbox{if}\quad \alpha=1, \\
		0 \quad &\mbox{if}\quad  \alpha<1.
	\end{cases}
\end{equation*}
Therefore, if $p+\alpha>2$ and $\alpha<1$ we have $u'(\xi_2-)=u'(\xi_2+)=0$.

\subsection{Asymptotics near 1}

Let us assume that $D(u)\sim (1-u)^\beta$ and $g(u) \sim (1-u)^\gamma$ as $u\to 1-$ for some $\beta \in \R$, $\gamma>0$.
Since the equation \eqref{ODE-y} is not separable on $(\theta,1)$, the asymptotic analysis becomes more involved than in the previous case.
However, we can apply the same reasoning as in \cite[Section 5.1]{DrZa-tw}, where we investigated asymptotic properties of solutions in the absence of convection.
In fact, this technique yields the same results also when $h(u) \geq 0$ instead of $h\equiv 0$.
Replacing $c$ by $c+h(u)$ in \cite[Section 5.1]{DrZa-tw}, we obtain
the same classification of solutions as in Theorems 5.1 and 5.2 therein. In our current notation, these theorems read as follows.

\begin{theorem}
\label{t:as1}
Let $D(u) \sim (1-u)^\beta$, $g(u) \sim (1-u)^\gamma$ as $u\to 1-$ where $\beta \in \R$ and $\gamma>0$ are such that
\begin{equation*}
	-1<\gamma+\frac{\beta}{p-1}\leq \frac{1}{p-1}
\end{equation*}
for given $p>1$.
If
\begin{equation*}
	\frac{\gamma-\beta+1}{p}<1,
\end{equation*}
then $\xi_1>-\infty$. If
\begin{equation*}
	\frac{\gamma-\beta+1}{p}\geq1,
\end{equation*}
then $\xi_1=-\infty$.
\end{theorem}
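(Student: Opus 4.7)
By Proposition \ref{p:equiv}, the solution profile is encoded in $y_{c^*}$, and from \eqref{xi1-xi2} we have $\xi_1 = -\int_\theta^1 D^{p'-1}(s)/y_{c^*}^{1/p}(s)\,\ud s$. So the question reduces to a one-sided convergence test at $s = 1-$, and the heart of the matter is to extract the correct asymptotic rate of $y_{c^*}(u)$ as $u \to 1-$.

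Near $u = 1$, the equation \eqref{ODE-y} reads $y' = p'\bigl[(c^* + h(u))\,y^{1/p} - f(u)\bigr]$ with $f(u) \sim (1-u)^{\gamma + \beta/(p-1)}$, and two candidate scalings present themselves: a ``diffusive'' balance $y' \sim (c^* + h)y^{1/p}$ predicting $y \sim (1-u)^{p'}$ (so that $(c^*+h)y^{1/p}\sim (1-u)^{1/(p-1)}$), and a ``reactive'' balance $y' \sim -p' f$ predicting $y \sim (1-u)^\kappa$ with $\kappa = \gamma + 1 + \beta/(p-1)$. The hypothesis $\gamma + \beta/(p-1) \leq 1/(p-1)$ says that $f$ dominates (or matches) the diffusive expression, forcing the reactive scaling; the companion hypothesis $\gamma + \beta/(p-1) > -1$ guarantees $\kappa > 0$ and at the same time makes $f$ integrable on $(\theta,1)$ as required by \eqref{int-Dg-finite}.

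Since $h \geq 0$ and $c^* > 0$, the coefficient $c^* + h(u)$ is continuous and bounded below by a positive constant on $[\theta, 1]$. Consequently, the sub- and supersolution construction of \cite[Section 5.1]{DrZa-tw} (carried out there for constant $c$ with $h \equiv 0$) applies verbatim after replacing $c$ by $c^* + h(u)$, yielding $y_{c^*}(u) \sim (1-u)^\kappa$ as $u \to 1-$. Then
\begin{equation*}
\frac{D^{p'-1}(s)}{y_{c^*}^{1/p}(s)} \sim (1-s)^{\beta/(p-1) - \kappa/p}, \quad s \to 1-,
\end{equation*}
and a short computation shows $\beta/(p-1) - \kappa/p > -1$ is equivalent to $(\gamma - \beta + 1)/p < 1$. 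Both alternatives of the theorem follow immediately from this convergence test applied to $\int^{1-}$. The main obstacle is the rigorous two-sided bound $y_{c^*}(u) \sim (1-u)^\kappa$: with $c$ replaced by the $u$-dependent coefficient $c^* + h(u)$, the barrier functions of \cite{DrZa-tw} must be refined to absorb the extra variability in $h$, but no qualitatively new phenomena arise.
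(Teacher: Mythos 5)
Your proposal follows essentially the same route as the paper: reduce via \eqref{xi1-xi2} to a convergence test for $\int^{1-} D^{p'-1}(s)\,y_{c^*}^{-1/p}(s)\,\ud s$, and obtain the rate $y_{c^*}(u)\sim(1-u)^{\gamma+1+\beta/(p-1)}$ by carrying over the barrier construction of \cite[Section 5.1]{DrZa-tw} with $c$ replaced by $c^*+h(u)$, which is exactly the argument the paper invokes. The exponent computation and the equivalence of $\beta/(p-1)-\kappa/p>-1$ with $(\gamma-\beta+1)/p<1$ are correct, so the proposal is sound.
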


\begin{theorem}
\label{t:as1-2}
Let $D(u) \sim (1-u)^\beta$, $g(u) \sim (1-u)^\gamma$ as $u\to 1-$ where $\beta \in \R$ and $\gamma>0$ are such that
\begin{equation*}
	\gamma+\frac{\beta}{p-1} > \frac{1}{p-1}
\end{equation*}
for given $p>1$. If $\gamma<1$ then $\xi_1>-\infty$. If $\gamma \geq 1$ then $\xi_1=-\infty$.
\end{theorem}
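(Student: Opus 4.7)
The plan is to study the convergence of
\[
-\xi_1 = \int_\theta^1 \frac{D^{p'-1}(s)}{y_{c^*}^{1/p}(s)}\,\ud s,
\]
given by \eqref{xi1-xi2}, which by Proposition \ref{p:equiv} determines whether $u$ attains $1$ at a finite wave coordinate. The standing hypothesis $\gamma + \beta/(p-1) > 1/(p-1)$ is equivalent to $\beta(p'-1) + \gamma > p'-1$, so the source $f(u) \sim (1-u)^{\beta(p'-1)+\gamma}$ decays faster than $(1-u)^{p'-1}$, and the balance $y' \sim -p'f$ that governs Theorem \ref{t:as1} is no longer self-consistent. A different balance must now determine the rate $y_{c^*}(u) \to 0$ as $u \to 1-$.

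First I switch to $z \coloneqq y_{c^*}^{1/p'}$. Using $y^{1/p} = z^{p'-1}$ (since $p'/p = p'-1$), a direct computation turns \eqref{ODE-y} into
\[
z'(u) = (c^* + h(u)) - \frac{f(u)}{z^{p'-1}(u)}, \quad u \in (\theta,1), \qquad z(1) = 0,
\]
where $c^* + h(u)$ is continuous on $[0,1]$ with $c^* \leq c^* + h(u) \leq c^* + \max h$. I then claim the sharp two-sided asymptotic
\[
z(u) \sim (1-u)^{\beta+\gamma(p-1)} \quad \text{as } u \to 1-,
\]
equivalently $y_{c^*}^{1/p}(u) = z^{p'-1}(u) \sim (1-u)^{\beta(p'-1)+\gamma}$ (using $(p-1)(p'-1)=1$). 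Heuristically, the hypothesis gives $\beta+\gamma(p-1) > 1$, hence $z'(u) \to 0$, and the leading balance $c^* + h(1) = f(u)/z^{p'-1}(u)$ then pins down the stated rate. To turn this into a rigorous two-sided estimate I would follow the sub-/super-solution construction of \cite[Section~5.1]{DrZa-tw}, observing that the only change is that the constant $c$ there is replaced by the bounded continuous function $c^* + h(u)$ with positive lower bound; the comparison arguments apply verbatim.

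With the asymptotic in hand, I plug $D^{p'-1}(s) \sim (1-s)^{\beta(p'-1)}$ into the integrand; the two $\beta$-exponents cancel exactly, leaving
\[
\frac{D^{p'-1}(s)}{y_{c^*}^{1/p}(s)} \sim (1-s)^{\beta(p'-1) - \beta(p'-1) - \gamma} = (1-s)^{-\gamma} \quad\text{as } s \to 1-.
\]
Hence $-\xi_1$ is finite iff $\int^1 (1-s)^{-\gamma}\,\ud s < +\infty$, i.e., iff $\gamma < 1$, which yields the stated dichotomy $\xi_1 > -\infty$ for $\gamma<1$ and $\xi_1 = -\infty$ for $\gamma \geq 1$.

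The main obstacle is justifying the sharp asymptotic for $z$ (equivalently $y_{c^*}$) near $u=1$. The heuristic balance is immediate, but the two-sided estimate requires building explicit barriers of the form $K_\pm(1-u)^{\beta+\gamma(p-1)}$ for the $z$-equation and invoking comparison on $(\theta,1)$; one has to track how the bounds on $c^* + h$ and the implicit constants in $D(u)\sim(1-u)^\beta$ and $g(u)\sim(1-u)^\gamma$ propagate into $K_\pm$. Since $c^* + h$ is only a bounded, strictly positive perturbation of the constant $c$ appearing in \cite{DrZa-tw}, this transfer is routine but merits being spelled out.
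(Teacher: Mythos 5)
Your argument is correct and follows essentially the same route as the paper: the paper proves Theorem \ref{t:as1-2} by carrying over the sub-/super-solution (barrier) analysis of \cite[Section 5.1]{DrZa-tw} with the constant $c$ replaced by $c^*+h(u)$, which is exactly the reduction you invoke, and your dominant-balance computation $y_{c^*}^{1/p}(u)\sim f(u)\sim(1-u)^{\beta(p'-1)+\gamma}$ together with the integrand estimate $(1-s)^{-\gamma}$ in \eqref{xi1-xi2} reproduces the paper's dichotomy in $\gamma$.
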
 

\begin{remark}
To visualize conditions from Theorems \ref{t:as1}, \ref{t:as1-2},
we introduce the following sets:
\[
\begin{split}
	\mathcal{M}_1^1&:=\{(\gamma,\beta)\in\mathbb{R}^2: \gamma>0, -1<\gamma+\frac{\beta}{p-1}\leq \frac{1}{p-1}, \gamma-\beta+1<p\},\\
	\mathcal{M}_1^2&:=\{(\gamma,\beta)\in\mathbb{R}^2: \gamma>0, -1<\gamma+\frac{\beta}{p-1}\leq \frac{1}{p-1}, \gamma-\beta+1\geq p\},\\
	\mathcal{M}_1^3&:=\{(\gamma,\beta)\in\mathbb{R}^2: \gamma>0, \gamma+\frac{\beta}{p-1}>\frac{1}{p-1}, \gamma<1\},\\
	\mathcal{M}_1^4&:=\{(\gamma,\beta)\in\mathbb{R}^2: \gamma>0, \gamma+\frac{\beta}{p-1}>\frac{1}{p-1}, \gamma\geq 1\}.
\end{split}
\]
Then $\xi_1>-\infty$ if and only if $(\gamma,\beta)\in \mathcal{M}_1^1 \cup \mathcal{M}_1^3$ and
$\xi_1=-\infty$ if and only if $(\gamma,\beta)\in \mathcal{M}_1^2 \cup \mathcal{M}_1^4$.
See Figure \ref{f:as1} for the case $p=2$.

\begin{figure}[h!]
	\centering
	\includegraphics[width=8cm]{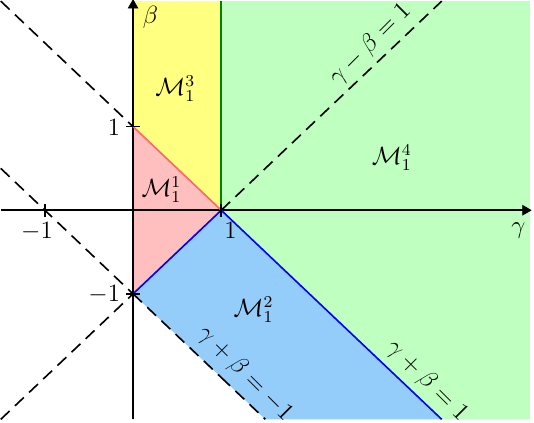}
	\caption{Visualization of the sets $\mathcal{M}_1^1$, $\mathcal{M}_1^2$, $\mathcal{M}_1^3$ and $\mathcal{M}_1^4$ for $p=2$}
	\label{f:as1}
\end{figure}

Moreover, one can show exactly as in \cite[Remark 6.4]{DZ22} that if $\xi_1 \in \R$, i.e., $(\gamma,\beta) \in \mathcal{M}_1^1 \cup \mathcal{M}_1^3$, then $u'(\xi_1)=0$.

\end{remark}

\section*{Acknowledgement}
Pavel Dr\'{a}bek was supported by the Grant Agency of the Czech Republic (GA\v{C}R) under Grant No.~22-18261S.

\appendix

\section{Auxiliary lemmas for Proposition \ref{p:prop-u}}
\label{ap:A}

\begin{lemma}
	\label{l:A1}
	Let $u\in C^1(\R)$ be a solution of the initial value problem 
	\begin{equation}
		\label{2IVP}
		\begin{cases}
			\left(D(u)|u'|^{p-2}u'\right)' =-\left(c+h(u)\right)u',\\
			u(\xi_0)=u_0 \in (0,1),\,\, u'(\xi_0)=0.
		\end{cases}
	\end{equation}
	Then $u$ does not verify part (c) of Definition \ref{def:sol}.
\end{lemma}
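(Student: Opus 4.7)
The key observation is that the equation in \eqref{2IVP} is exact: it can be rewritten as $(D(u)|u'|^{p-2}u' + cu + H(u))'=0$, so the bracketed quantity is constant on $\R$. Evaluating at $\xi_0$ with the initial data $u(\xi_0)=u_0$, $u'(\xi_0)=0$ yields the pointwise identity
\begin{equation*}
	D(u(\xi))|u'(\xi)|^{p-2}u'(\xi) = \Phi(u_0)-\Phi(u(\xi)), \qquad \Phi(u)\coloneqq cu+H(u), \quad \xi \in \R.
\end{equation*}
Arguing by contradiction, suppose $u$ verifies part (c) of Definition \ref{def:sol}. By Definition \ref{def:sol}(b) the left-hand side tends to $0$ as $\xi\to\pm\infty$, and continuity of $\Phi$ together with $H(0)=0$ forces the necessary conditions $c=-H(1)$ and $H(u_0)=u_0H(1)$. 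If either fails, the contradiction is immediate.

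\textbf{Reduction to an autonomous scalar ODE and phase-plane analysis.} Assuming both necessary conditions, the first integral is equivalent to the scalar ODE $u'=F(u)$ with
\begin{equation*}
	F(u)\coloneqq \operatorname{sgn}(\psi(u))\bigl(|\psi(u)|/D(u)\bigr)^{1/(p-1)}, \qquad \psi(u)\coloneqq H(1)u-H(u),
\end{equation*}
and $F$ vanishes at the three points $u=0,\,u_0,\,1$. The plan is to show that no $C^1(\R)$-trajectory passing through $(u_0,0)$ can connect $u=1$ at $-\infty$ with $u=0$ at $+\infty$. A local analysis of $F$ at $u_0$, governed by the sign of $\psi'(u_0)=H(1)-h(u_0)$, classifies $u_0$ as either an attractor or a repeller of the scalar dynamics. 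In the attractor case every forward trajectory from $(u_0,0)$ stays at $u_0$, so $u(+\infty)=u_0\neq 0$; in the repeller case every backward trajectory stays at $u_0$, so $u(-\infty)=u_0\neq 1$. In either case the prescribed boundary values cannot both be attained, delivering the contradiction.

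\textbf{Main obstacle.} The principal technical difficulty is handling the non-uniqueness of $u'=F(u)$ at the critical point $u_0$ when $p>2$, where $F$ is only H\"older continuous. Non-trivial branches emanating from $(u_0,0)$ must be ruled out by observing that each of them respects the sign of $\psi$ dictated by the attractor/repeller classification; while such a branch can reach an endpoint $\{0,1\}$ in finite $\xi$ and there be continued by the constant equilibrium $u\equiv 0$ or $u\equiv 1$ (which is $C^1(\R)$-compatible because $F(0)=F(1)=0$), the opposite end of $\R$ remains pinned at $u_0$. The remaining degenerate configurations, where $\psi$ has a higher-order zero at $u_0$, are treated separately: the higher-order vanishing of $F$ restores Lipschitz regularity, uniqueness forces $u\equiv u_0$, and the failure of (c) is immediate.
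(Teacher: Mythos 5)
Your overall strategy (first integral, then analysis of the resulting first--order equation) is close in spirit to the paper's, but the step you use to dispose of the degenerate case does not work, and this is a genuine gap. Your attractor/repeller dichotomy is governed by the sign change of $\psi(u)=H(1)u-H(u)$ at $u_0$, and when $\psi'(u_0)=H(1)-h(u_0)=0$ you claim that ``the higher-order vanishing of $F$ restores Lipschitz regularity'' so that uniqueness forces $u\equiv u_0$. Since $H$ is only assumed to be $C^1[0,1]$, the condition $\psi'(u_0)=0$ gives merely $\psi(u)=o(|u-u_0|)$; for instance $\psi(u)\sim\pm|u-u_0|^{1+\varepsilon}$ near $u_0$ is admissible, and then $F(u)\sim |u-u_0|^{(1+\varepsilon)/(p-1)}$ is \emph{not} Lipschitz near $u_0$ whenever $1+\varepsilon<p-1$, i.e.\ for every $p>2+\varepsilon$. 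Uniqueness for $u'=F(u)$ through $(\xi_0,u_0)$ then genuinely fails and escaping trajectories exist. Worse, when $\psi'(u_0)=0$ the function $\psi$ need not change sign at $u_0$ at all; if $\psi\ge 0$ on a neighborhood of $u_0$, a trajectory can pass monotonically through $u_0$ with zero derivative there, so \emph{neither} half-line is pinned at $u_0$ and your ``attractor or repeller'' conclusion does not apply. The lemma's conclusion can still be rescued in that configuration (to satisfy (c) the profile would have to descend through a neighborhood of $u_0$ on which $u'=F(u)\ge 0$, which a mean-value/barrier argument excludes), but no such argument appears in your proposal, so the degenerate case is left unproved.

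There is also a scope issue worth flagging: your derivation of the necessary conditions $c=-H(1)$ and $H(u_0)=u_0H(1)$ invokes part (b) of Definition \ref{def:sol}, which is not among the hypotheses of Lemma \ref{l:A1} (the lemma assumes only $u\in C^1(\R)$ solving \eqref{2IVP} and asserts that (c) alone must fail); without (b) the limits of $D(u)|u'|^{p-2}u'$ at $\pm\infty$ need not vanish, because $D$ may be singular or degenerate at $0$ and $1$ — for the same reason your assertion that $F(0)=F(1)=0$, used to glue constant equilibria at the endpoints, is unjustified in general (though inessential). The paper's proof avoids all of these difficulties: from the same first integral it writes $u'=\mathcal{S}_{p'}\bigl(D(u)^{-1}[c(u_0-u)+H(u_0)-H(u)]\bigr)$ with $\mathcal{S}_{p'}(\nu)=|\nu|^{p'-2}\nu$ and, for arbitrary $c$ and without any limit computation, concludes that $u\equiv u_0$ on all of $\R$ when $1<p\le 2$ (locally Lipschitz right-hand side near $u_0$) and on at least one half-line when $p>2$ (one-sided Lipschitz condition), which already contradicts (c). If you want to keep your phase-line route, you must replace the Lipschitz claim in the degenerate case by a sign/barrier argument covering one-signed and oscillatory behavior of $\psi$ near $u_0$, or better, drop the limit conditions and argue uniqueness at $u_0$ directly as the paper does.
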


\begin{proof}
	Integrating the equation in \eqref{2IVP} and using the initial conditions yields
	\begin{equation} \label{ivp_int}
		D(u(\xi))|u'(\xi)|^{p-2}u'(\xi)=c(u_0-u(\xi))+H(u_0)-H(u(\xi)), \quad \xi \in \R.
	\end{equation}
	%for $\xi$ close to $\xi_0$. 
	%Using the notation
	Put
	\begin{equation*}
		\mathcal{S}_p(\nu) \coloneqq |\nu|^{p-2}\nu \, \mbox{ for }\, \nu \ne 0, \,\,
		\mathcal{S}_p(0)=0, \quad p>1.
	\end{equation*}
	%this is equivalent to 
	Since $\mathcal{S}_{p'}$ is the inverse function to $\mathcal{S}_p$, equation \eqref{ivp_int} is for $D(u(\xi)) \ne 0$ equivalent to
	\begin{equation}
		\label{u'}
		u'(\xi)=\mathcal{S}_{p'}\left(\frac{1}{D(u(\xi))}\left[c(u_0-u(\xi))+H(u_0)-H(u(\xi))\right]\right).
	\end{equation}
	%where $\mathcal{S}_{p'}$ is the inverse function to $\mathcal{S}_p$.
	
	If $1<p\leq 2$ then, due to $D\in C^1(0,1)$, $H \in C^1[0,1]$ and $p'>2$, the right-hand side of \eqref{u'} is locally Lipschitz continuous in $u$. Hence $u(\xi)=u_0$, $\xi \in \R$, is a unique solution of \eqref{2IVP} in $\R$, and therefore does not verify part (c) of Definition \ref{def:sol}.
	
	If $p>2$, i.e., $1<p'< 2$, then the right-hand side of \eqref{u'} is not Lipschitz continuous only at one point $u=u_0$, but it is one-sided Lipschitz continuous there
	due to the fact that $\mathcal{S}_{p'}(\nu)=|\nu|^{p'-2}\nu$ satisfies one-sided Lipschitz condition. 
	Therefore, either $u(\xi)=u_0$, $\xi \in (-\infty,\xi_0]$ is a unique solution of \eqref{2IVP} in $(-\infty,\xi_0]$, or $u(\xi)=u_0$, $\xi \in [\xi_0,+\infty)$, is a unique solution of \eqref{2IVP} in $[\xi_0,+\infty)$. In either case, part (c) of Definition \ref{def:sol} is not satisfied. 
\end{proof}

\begin{lemma}
	\label{l:A2}
	Let $u$ be a solution of \eqref{2BVP-u} and let $\xi_0 \in \R$ be such that $u(\xi_0) \in (0,1)$, $u'(\xi_0)=0$ and
	\[
	\left.\left(D(u(\xi))|u'(\xi)|^{p-2}u'(\xi))\right)'\right|_{\xi=\xi_0}<0.
	\]
	Then $u$ has a strict local maximum at $\xi_0$.
\end{lemma}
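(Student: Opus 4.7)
The idea is to transfer the sign information from the ``flux'' $\Psi(\xi) := D(u(\xi))|u'(\xi)|^{p-2}u'(\xi)$ to the derivative $u'$ itself, exploiting the fact that the map $s \mapsto |s|^{p-2}s$ is sign-preserving and the fact that $D(u(\xi))$ is strictly positive near $\xi_0$.

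First I would observe that, since $u(\xi_0) \in (0,1)$, continuity of $u$ gives an open neighborhood $J = (\xi_0 - \varepsilon, \xi_0 + \varepsilon)$ of $\xi_0$ on which $u(\xi) \in (0,1)$, hence $J \subset I_u$ and $D(u(\xi)) > 0$ throughout $J$. By Remark \ref{r:diff-v}, the function $\Psi$ belongs to $C^1(I_u)$, and in particular is $C^1$ on $J$. The hypothesis $u'(\xi_0)=0$ gives $\Psi(\xi_0)=0$, while the assumption $\Psi'(\xi_0)<0$ means that $\Psi$ is strictly decreasing at $\xi_0$. Shrinking $\varepsilon$ if necessary, I get
\begin{equation*}
\Psi(\xi) > 0 \text{ for } \xi \in (\xi_0 - \varepsilon, \xi_0), \qquad \Psi(\xi) < 0 \text{ for } \xi \in (\xi_0, \xi_0 + \varepsilon).
\end{equation*}

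Next I would translate this back to a sign statement on $u'$. Since $D(u(\xi)) > 0$ on $J$, the sign of $\Psi(\xi)$ equals the sign of $|u'(\xi)|^{p-2}u'(\xi)$, which in turn equals the sign of $u'(\xi)$ (because $\nu \mapsto |\nu|^{p-2}\nu$ is strictly increasing on $\R$ and vanishes only at $\nu = 0$). Consequently,
\begin{equation*}
u'(\xi) > 0 \text{ on } (\xi_0 - \varepsilon, \xi_0), \qquad u'(\xi) < 0 \text{ on } (\xi_0, \xi_0 + \varepsilon).
\end{equation*}
Hence $u$ is strictly increasing on $(\xi_0-\varepsilon,\xi_0]$ and strictly decreasing on $[\xi_0,\xi_0+\varepsilon)$, which gives a strict local maximum at $\xi_0$.

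There is no serious obstacle here; the only mildly delicate point is making sure we may legitimately talk about $\Psi'(\xi_0)$ and evaluate signs in a neighborhood, which is handled by the observation that $J \subset I_u$ and the regularity stated in Remark \ref{r:diff-v}. No appeal to any uniqueness theory (as in Lemma \ref{l:A1}) is needed, because the sign analysis of $\Psi$ alone already forces the desired monotonicity of $u$ across $\xi_0$.
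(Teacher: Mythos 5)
Your proof is correct and follows essentially the same route as the paper's: both arguments turn the hypotheses $u'(\xi_0)=0$ and negativity of the flux derivative into a sign change of $u'$ across $\xi_0$, using $D(u(\xi))>0$ near $\xi_0$ and the fact that $\nu\mapsto|\nu|^{p-2}\nu$ is strictly increasing and sign-preserving. The only cosmetic difference is that you read off the sign of the flux $D(u(\xi))|u'(\xi)|^{p-2}u'(\xi)$ directly from its vanishing and negative derivative at $\xi_0$, whereas the paper first applies the product rule (the term with $\frac{\ud D}{\ud u}$ drops out since $u'(\xi_0)=0$) to conclude that $\bigl(|u'(\xi)|^{p-2}u'(\xi)\bigr)'\big|_{\xi=\xi_0}<0$; both versions rest on the regularity provided by Remark \ref{r:diff-v}.
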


\begin{proof}
	Let us recall that it follows from Remark \ref{r:diff-v} that both functions $\xi \mapsto D(u(\xi))|u'(\xi)|^{p-2}u'(\xi)$ and $\xi \mapsto |u'(\xi)|^{p-2}u'(\xi)$ are continuously differentiable in a small neighborhood of $\xi$.
	We have
	\begin{equation*}
		\begin{split}
			0>\left.\left(D(u(\xi))|u'(\xi)|^{p-2}u'(\xi))\right)'\right|_{\xi=\xi_0}
			= {}&
			\left.\frac{\ud D}{\ud u}\right|_{u=u(\xi_0)} \underbrace{|u'(\xi_0)|^{p}}_{=0} 
			+
			D(u(\xi_0))\left.\left(|u'(\xi)|^{p-2}u'(\xi)\right)'\right|_{\xi=\xi_0}.
		\end{split}
	\end{equation*}
	Since $D(u(\xi_0))>0$, we get $\left.\left(|u'(\xi)|^{p-2}u'(\xi)\right)'\right|_{\xi=\xi_0}<0$, and
	therefore, $|u'(\xi)|^{p-2}u'(\xi)$ is strictly decreasing in $\xi_0$ and equal to 0 at $\xi=\xi_0$.
	Since the power $\mathcal{S}_p(\nu)=|\nu|^{p-2}\nu$ is strictly increasing, $u'(\xi)$ is strictly decreasing at $\xi=\xi_0$.
	Hence $\xi_0$ is the point of strict local maximum of $u$.
\end{proof}

\section{Technical lemmas for Section \ref{s:Ex}}
\label{ap:B}

\begin{lemma}[Technical inequalities]
	\label{l:tech}
	Let $a>0$, $b>0$. Then
	\begin{itemize}
		\item [\emph{(i)}]
		for $r \geq 2$ we have
		\begin{equation*}
			a^r+ra^{r-1}b + b^r \leq (a+b)^r\,;
		\end{equation*}
		
		\item [\emph{(ii)}]
		for $1<r<2$ we have
		\begin{equation*}
			a^r+ra^{r-1}b + b^r \leq \hat{k}(r)(a+b)^r,
		\end{equation*}
		where
		\begin{equation*}
			\hat{k}(r)=\dfrac{1+r(r-1)^{\frac{1}{r-2}}+(r-1)^{\frac{r}{r-2}}}{\vphantom{A^{A^{A^{A^a}}}}\left(1+(r-1)^{\frac{1}{r-2}}\right)^r}\,.
		\end{equation*}
	\end{itemize}
\end{lemma}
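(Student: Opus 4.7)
\emph{Proof sketch.} By homogeneity in $(a,b)$, we divide through by $a^r$ and set $t \coloneqq b/a > 0$. Both inequalities reduce to the single-variable statement
\begin{equation*}
1 + rt + t^r \le C(r)(1+t)^r, \qquad t > 0,
\end{equation*}
with $C(r) = 1$ in case (i) and $C(r) = \hat{k}(r)$ in case (ii). The plan is therefore to determine the supremum over $t > 0$ of
\begin{equation*}
\phi(t) \coloneqq \frac{1 + rt + t^r}{(1+t)^r}.
\end{equation*}

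Direct differentiation combined with the algebraic identity
\begin{equation*}
(1+t)\bigl(1 + t^{r-1}\bigr) - \bigl(1 + rt + t^r\bigr) = t\bigl(t^{r-2} - (r-1)\bigr)
\end{equation*}
gives the key formula
\begin{equation*}
\phi'(t) = \frac{r\, t \bigl(t^{r-2} - (r-1)\bigr)}{(1+t)^{r+1}}.
\end{equation*}
Since $\phi(0+) = 1$ and $\phi(t) \to 1$ as $t \to +\infty$, the whole argument is driven by the sign of $t^{r-2} - (r-1)$. In case (i), $r \geq 2$ makes $t \mapsto t^{r-2}$ nondecreasing, so $\phi'$ passes from $\leq 0$ to $\geq 0$ at the unique critical point $t^{*} = (r-1)^{1/(r-2)}$ (and $\phi \equiv 1$ in the borderline case $r = 2$). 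Consequently $t^{*}$ is a global \emph{minimum} of $\phi$, which together with the boundary values $1$ yields $\phi(t) \leq 1$ on $(0,+\infty)$, i.e., the inequality (i).

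In case (ii), $1 < r < 2$ gives $r - 2 < 0$, so $t \mapsto t^{r-2}$ is strictly decreasing from $+\infty$ to $0$, and $\phi'$ now passes from $> 0$ to $< 0$ at the same $t^{*} = (r-1)^{1/(r-2)} > 1$. Hence $t^{*}$ becomes a global \emph{maximum}, and
\begin{equation*}
\sup_{t > 0} \phi(t) = \phi(t^{*}) = \frac{1 + r t^{*} + (t^{*})^r}{(1+t^{*})^r}.
\end{equation*}
Plugging in $t^{*} = (r-1)^{1/(r-2)}$ and $(t^{*})^r = (r-1)^{r/(r-2)}$ recovers exactly the constant $\hat{k}(r)$ displayed in the statement. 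The only delicate step is the sign bookkeeping for $t^{r-2} - (r-1)$, whose monotonicity (and hence the role of $t^{*}$ as a maximum or minimum) flips between the two ranges of $r$; everything else reduces to routine one-variable calculus.
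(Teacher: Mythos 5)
Your proposal is correct and follows essentially the same route as the paper: reduce by homogeneity to $t=b/a$, study $\phi(t)=(1+rt+t^r)/(1+t)^r$ with boundary values $1$ at $t\to 0+$ and $t\to+\infty$, and identify the unique critical point $t^*=(r-1)^{1/(r-2)}$, whose value gives $\hat{k}(r)$ in case (ii). The only (harmless) difference is how the critical point is classified: you read off the sign of $\phi'$ from the factor $t^{r-2}-(r-1)$, while the paper compares $\phi(1)=(2+r)/2^r$ with $1$; both are valid.
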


\begin{proof}
	We put	$t=\frac{b}{a}>0$ and write the inequality in an equivalent form
	\begin{equation*}
		f(t) \coloneqq \frac{1+rt+t^r}{(1+t)^r} \leq \hat{k}(r), 
	\end{equation*}
	where we set $\hat{k}(r)=1$ for $r \geq 2$. Then the optimal choice for $\hat{k}(r)$ would be $\hat{k}(r)= \max_{t\geq 0} f(t)$, if this maximum exists. Indeed, it does.  Namely, $f$ is a continuously differentiable function on $[0,+\infty)$ satisfying $f(0)=1=\lim\limits_{t \to +\infty} f(t)$. An elementary calculation yields that $t_1=(r-1)^{\frac{1}{r-2}}$ is the only stationary point of $f$ in $(0,+\infty)$.
	
	Part (i). It is clear that equality holds for $r=2$. Let $r>2$. Then $f(1)=\frac{2+r}{2^r}<1$. Hence $t_1=(r-1)^{\frac{1}{r-2}}$ is the point of global minimum of $f$, $0<f(t_1)\leq f(1)<1$ and therefore $\max_{t\geq 0} f(t)=f(0)=1$.
	
	Part (ii). Let $1<r<2$. Then $f(1)=\frac{2+r}{2^r}>1$ and hence $t_1$ is the point of global maximum of $f$ in $[0,+\infty)$ with
	\begin{equation*}
		\hat{k}(r)=f(t_1)= \frac{1+r(r-1)^{\frac{1}{r-2}}+(r-1)^\frac{r}{r-2}}{\vphantom{A^{A^{A^{A^a}}}}\left(1+(r-1)^\frac{1}{r-2}\right)^r}\,.
	\end{equation*}
\end{proof}

\begin{lemma}[Inequality \eqref{yH-theta}]
\label{l:y<H}
Assume that $h(u) \geq 0$ in $[0,1]$ and let $y_0=y_0(u)$ be a solution of the initial value problem \eqref{bivp-comb} with $c=0$. 
If
\begin{equation}
\label{eq_lemma}
H^{p'}(1) \leq k(p) \int_0^1 f(u)\,\ud u,
\end{equation} 
where $k=k(p)$ is given by \eqref{k-def},
then
\begin{equation*}
	y_0^\frac{1}{p'}(\theta) > H(\theta).
\end{equation*}
\end{lemma}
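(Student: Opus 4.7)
My strategy is to reduce the claim to a lower bound on $\int_\theta^1 f/z^{p'-1}\,\ud u$ (with $z := y_0^{1/p'}$) and to close it using the crude a priori estimate $y_0 \leq p'F$, where $F(u) := \int_u^1 f(s)\,\ud s$.

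First, I would invoke \eqref{y_c>0-near1} to obtain $y_0 > 0$ on $(\theta,1)$, so that $z(u) = y_0^{1/p'}(u)$ is positive there. Since $y_0^{1/p} = z^{p'-1}$, the equation in \eqref{bivp-comb} with $c=0$ becomes $z'(u) = h(u) - f(u)/z^{p'-1}(u)$ on $(\theta,1)$. Integrating on this interval and using $z(1)=0$ (the convergence of $\int_\theta^1 z'\,\ud u = -z(\theta)$ being automatic from the continuity of $z$ at both endpoints), I obtain
\begin{equation*}
z(\theta) \,=\, H(\theta) - H(1) + \int_\theta^1 \frac{f(u)}{z^{p'-1}(u)}\,\ud u.
\end{equation*}
In parallel, integrating $y_0' = p'(hy_0^{1/p}-f)$ from $u$ to $1$ and using $h, y_0 \geq 0$ on $[\theta,1]$ yields $y_0(u) = p'F(u) - p'\int_u^1 hy_0^{1/p}\,\ud s \leq p'F(u)$, hence $z^{p'-1} = y_0^{1/p} \leq (p'F)^{1/p}$ on $(\theta,1)$.

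Combining these, and setting $I := \int_0^1 f\,\ud u = F(\theta)$ (since $f \equiv 0$ on $[0,\theta]$), the substitution $\sigma = F(u)$ evaluates
\begin{equation*}
\int_\theta^1 \frac{f(u)}{z^{p'-1}(u)}\,\ud u \,\geq\, (p')^{-1/p}\int_0^I \sigma^{-1/p}\,\ud\sigma \,=\, (p'I)^{1/p'},
\end{equation*}
so $z(\theta) \geq H(\theta) - H(1) + (p'I)^{1/p'}$. It therefore suffices to show $(p'I)^{1/p'} > H(1)$, i.e.\ $H^{p'}(1) < p'I$. Under the hypothesis $H^{p'}(1) \leq k(p) I$ with $I > 0$, this reduces to the strict inequality $k(p) < p'$, which I would verify case by case from \eqref{k-def}: for $p=2$, $k=1<2=p'$; for $1<p<2$, $p'>2$ gives $k=(2^{p'-1}-1)^{-1}<1<p'$; for $p>2$, Lemma~\ref{l:tech}\,(ii) gives $\hat{k}(p') \geq 1$, so $p'-1+\hat{k}(p') \geq p'$ and $k(p) \leq 1 < p'$.

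\noindent\textbf{Main obstacle.} The only delicate step is choosing a workable pointwise upper bound on $z^{p'-1} = y_0^{1/p}$; fortunately the naive estimate $y_0 \leq p'F$ already suffices, because $f/F^{1/p}$ has the explicit antiderivative proportional to $F^{1/p'}$. Since this route only exploits the weak inequality $k(p) < p'$ rather than the sharp value in \eqref{k-def}, I suspect that a more refined analysis through Lemma~\ref{l:tech} would be needed to harness the optimality of the constant; for the lemma as stated, however, the crude bound is enough.
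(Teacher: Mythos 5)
Your proof is correct, but it follows a genuinely different route from the paper. The paper argues by contradiction: assuming $y_0^{1/p'}(\theta)\leq H(\theta)$, it integrates the equation for $z=y_0^{1/p'}$, invokes the mean value theorem to produce the inequality \eqref{Hp'theta}, and then closes the contradiction by a case analysis ($p=2$, $1<p<2$, $p>2$) using the sharp algebraic inequalities of Lemma \ref{l:tech} -- this is precisely where the specific value of $k(p)$ in \eqref{k-def} enters. You instead argue directly: the supersolution bound $y_0(u)\leq p'F(u)$, $F(u)=\int_u^1 f(s)\,\ud s$ (which indeed follows by integrating \eqref{bivp-comb} from $u$ to $1$, since $h\geq 0$ and $(y_0^+)^{1/p}\geq 0$), inserted into the exact identity $z(\theta)=H(\theta)-H(1)+\int_\theta^1 f/z^{p'-1}\,\ud u$ obtained from $z'=h-f/z^{p'-1}$ on $(\theta,1)$, gives via the explicit antiderivative the lower bound $z(\theta)\geq H(\theta)-H(1)+\bigl(p'\int_0^1 f\,\ud u\bigr)^{1/p'}$; the conclusion then only needs $H^{p'}(1)<p'\int_0^1 f\,\ud u$, which follows from \eqref{eq_lemma} together with the elementary facts $k(p)<p'$ (your case check is right; for $p>2$ note $\hat{k}(p')\geq 1$, e.g.\ by letting $b\to 0$ in Lemma \ref{l:tech}\,(ii)) and $\int_0^1 f\,\ud u>0$, which you should state explicitly but which is immediate from $g>0$ on $(\theta,1)$ and $D>0$ on $(0,1)$; also note $H(1)\geq 0$ so the $p'$-th root can be taken. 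What each approach buys: yours is shorter, avoids Lemma \ref{l:tech} and the case distinction entirely, and in fact establishes the lemma under the weaker hypothesis $H^{p'}(1)<p'\int_0^1 D^{p'-1}g\,\ud u$ (consistent, incidentally, with the nonexistence threshold in \eqref{nonex-condition}, and essentially sharp when $f$ concentrates near $u=1$, where $y_0(\theta)\approx p'\int_0^1 f -$ the convective loss); the paper's argument, by contrast, is organized around the particular constant $k(p)$, which Remark 2.8 relates to the estimates of \cite{MM} for $p=2$, so your computation shows that for this lemma the crude bound already suffices and the refined constant is not needed.
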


\begin{proof}
We proceed by contradiction, that is, we assume that
\begin{equation*}
	y_0^\frac{1}{p'}(\theta) \leq H(\theta).
\end{equation*}
Since $f>0$ on $(\theta,1)$, it follows from the equation in \eqref{bivp-comb} that $y_0(u)>0$ for all $u \in (\theta,1)$. 
Set $z(u) \coloneqq y_0^\frac{1}{p'}(u)$. Then $z(u)>0$ in $(\theta,1)$, $z(1)=0$,
\begin{equation}
	\label{zH-theta}
	z(\theta) \leq H(\theta)
\end{equation}
and $z=z(u)$ satisfies the equation
\begin{equation}
\label{z-ODE1}
	[z^{p'}(u)]'=p'h(u)z^{p'-1}(u)-p'f(u), \quad u \in (\theta,1),
\end{equation}
or, equivalently,
\begin{equation}
\label{z-ODE2}
	z'(u)=h(u)-\frac{f(u)}{z^{p'-1}(u)}, \quad u \in (\theta,1).
\end{equation}
Integrating \eqref{z-ODE1} and using the mean value theorem, we obtain
\begin{equation}
	\label{zp'-theta}
	\begin{split}
		z^{p'}(\theta) &{}=
		z^{p'}(1) - p'\int_{\theta}^1 h(\sigma)z^{p'-1}(\sigma)\,\ud\sigma + p'\int_{\theta}^1 f(\sigma)\,\ud\sigma \\
		&{}=
		-p'z^{p'-1}(\tau_0) (H(1)-H(\theta)) + p'\int_{0}^1 f(\sigma)\,\ud\sigma
	\end{split}
\end{equation}
for some $\tau_0 \in (\theta,1)$. 
From \eqref{z-ODE2} we have
\begin{equation*}
	z(\tau_0)-z(\theta)=\int_{\theta}^{\tau_0} h(\sigma)\,\ud\sigma - \int_{\theta}^{\tau_0} \frac{f(\sigma)}{z^{p'-1}(\sigma)}\,\ud\sigma
	<
	H(\tau_0)-H(\theta) 		
\end{equation*}
and hence
\begin{equation}
	\label{ztau0-ztheta-p}
	z(\tau_0)<z(\theta)+ H(1)-H(\theta)
\end{equation}
thanks to the monotonicity of $H$ (in particular, $h\geq 0$ implies that $H$ is nondecreasing).
It follows from \eqref{zp'-theta}, \eqref{ztau0-ztheta-p} together with \eqref{zH-theta} 
\begin{equation}
\label{Hp'theta}
H^{p'}(\theta) > -p'\left(H(\theta)+[H(1)-H(\theta)]\right)^{p'-1}(H(1)-H(\theta))+p'\int_0^1 f(\sigma)\,\ud \sigma.
\end{equation}
Next we proceed separately for $p=2$, $1<p<2$ and $p>2$.

\smallskip
\noindent
\emph{Case 1:} $p=2$.
Since $p'=2$, \eqref{Hp'theta} becomes
\begin{equation*}
	H^2(\theta)>-2\left(H(\theta)+[H(1)-H(\theta)]\right)(H(1)-H(\theta))+2\int_0^1 f(\sigma)\,\ud\sigma.
\end{equation*}
Reorganizing the terms in the above inequality and using \eqref{eq_lemma}, we obtain
\begin{equation*}
	\begin{split}
		0 &{}> -H^2(\theta)-2H(\theta)(H(1)-H(\theta))-\left(H(1)-H(\theta)\right)^2 - \left(H(1)-H(\theta)\right)^2 
		+2\int_0^1 f(\sigma)\,\ud\sigma \\
		&= -H^2(1)-\left(H(1)-H(\theta)\right)^2+2\int_0^1 f(\sigma)\,\ud\sigma
		>2\left(\int_0^1 f(\sigma)\,\ud\sigma - H^2(1)\right) \geq 0,
	\end{split}
\end{equation*}
a contradiction.

\smallskip
\noindent
\emph{Case 2:} $1<p<2$.
Since $p'>2$, we use the well-known inequality
\begin{equation*}
	(a+b)^r \leq 2^{r-1}(a^r+b^r), \quad a,b>0, \,\, r>1,
\end{equation*}
with $a=H(\theta)$, $b=H(1)-H(\theta)$, $r=p'-1$ in \eqref{Hp'theta} and obtain
\begin{equation*}
	\begin{split}
		H^{p'}(\theta) &{}>-p'\left(H(\theta)+[H(1)-H(\theta)]\right)^{p'-1}(H(1)-H(\theta))+p'\int_0^1 f(\sigma)\,\ud\sigma \\
		& \geq -p'2^{p'-2}\left(H^{p'-1}(\theta)+\left[H(1)-H(\theta)\right]^{p'-1}\right)(H(1)-H(\theta))+p'\int_0^1 f(\sigma)\,\ud\sigma.
	\end{split}
\end{equation*} 
Hence
\begin{equation*}
\begin{split}
0 > &{} -H^{p'}(\theta)-p'2^{p'-2}H^{p'-1}(\theta)(H(1)-H(\theta))-p'2^{p'-2}\left(H(1)-H(\theta)\right)^{p'}+p'\int_0^1 f(\sigma)\,\ud\sigma \\
& = - H^{p'}(\theta) - p'H^{p'-1}(\theta)(H(1)-H(\theta)) - (H(1)-H(\theta))^{p'} \\
& \quad\, + (1-p'2^{p'-2})(H(1)-H(\theta))^{p'} + (p'-p'2^{p'-2})H^{p'-1}(\theta)(H(1)-H(\theta)) + p'\int_0^1 f(\sigma)\,\ud\sigma
\end{split}
\end{equation*}
and, using the inequality from Lemma \ref{l:tech}\,(i) with $a=H(\theta)$, $b=H(1)-H(\theta)$ and $r=p'$,
\begin{equation*}
	\begin{split}
		0 &{}> 
		- (H(\theta)+(H(1)-H(\theta)))^{p'} + (1-p'2^{p'-2})(H(1)-H(\theta))^{p'} \\
		& \quad\quad  + (p'-p'2^{p'-2})H^{p'-1}(\theta)(H(1)-H(\theta)) + p'\int_0^1 f(\sigma)\,\ud\sigma.
	\end{split}
\end{equation*}
Then $0 \leq H(\theta)\leq H(1)$ implies
\begin{equation*}
	0>-H^{p'}(1)+(1-p'2^{p'-2})H^{p'}(1)+(p'-p'2^{p'-2})H^{p'}(1)+p'\int_0^1 f(\sigma)\,\ud\sigma
\end{equation*}
and from \eqref{eq_lemma} we conclude
\begin{equation*}
	0>-p'(2^{p'-1}-1)H^{p'}(1)+p'\int_0^1 f(\sigma)\,\ud\sigma \geq 0,
\end{equation*}
a contradiction.

\smallskip
\noindent
\emph{Case 3:} $p>2$.
Since $1<p'<2$, we now use the well-known inequality
\begin{equation*}
	(a+b)^r \leq a^r+b^r,\quad a,b>0, \,\, 0<r<1,
\end{equation*}
with $a=H(\theta)$, $b=H(1)-H(\theta)$, $r=p'-1$ in \eqref{Hp'theta} and obtain
\begin{equation*}
	0>-H^{p'}(\theta)-p'\left(H^{p'-1}(\theta)+\left[H(1)-H(\theta)\right]^{p'-1}\right)(H(1)-H(\theta))+p'\int_0^1 f(\sigma)\,\ud\sigma,
\end{equation*}
i.e.,
\begin{equation*}
	0>-H^{p'}(\theta)-p'H^{p'-1}(\theta)(H(1)-H(\theta))-p'\left(H(1)-H(\theta)\right)^{p'}+p'\int_0^1 f(\sigma)\,\ud\sigma,
\end{equation*}
or equivalently
\begin{equation*}
	\begin{split}
		0>-H^{p'}(\theta)-p'H^{p'-1}(\theta)(H(1)-H(\theta))-\left(H(1)-H(\theta)\right)^{p'} \\
		-(p'-1)\left(H(1)-H(\theta)\right)^{p'}+p'\int_0^1 f(\sigma)\,\ud\sigma.
	\end{split}
\end{equation*}
For $a>0$, $b>0$, $r\in(1,2)$ we have $a^r+ra^{r-1}b+b^r \leq \hat{k}(r)(a+b)^r$
by the technical Lemma \ref{l:tech}\,(ii). We apply it with $a=H(\theta)$, $b=H(1)-H(\theta)$, $r=p'$:
\begin{equation*}
	0 >-\hat{k}(p')\left(H(\theta)+(H(1)-H(\theta))\right)^{p'}-(p'-1)\left(H(1)-H(\theta)\right)^{p'}+p'\int_0^1 f(\sigma)\,\ud\sigma.
\end{equation*}
But \eqref{eq_lemma} yields
\begin{equation*}
	0  > -(\hat{k}(p')+(p'-1))H^{p'}(1)+p'\int_0^1 f(\sigma)\,\ud\sigma \geq 0,
\end{equation*}
a contradiction.
\end{proof}

\end{document}